\documentclass[oneside,english]{amsart}
\usepackage[T1]{fontenc}
\usepackage[latin9]{inputenc}
\usepackage{amstext}
\usepackage{amsthm}
\usepackage{amssymb}
\usepackage{todonotes}
\usepackage{enumitem}
\usepackage{stmaryrd}
\usepackage{url}
\usepackage[hidelinks]{hyperref}
\usepackage{tikz}
\usetikzlibrary{arrows.meta, positioning, decorations.pathreplacing, calc}

\makeatletter
\numberwithin{equation}{section}
\numberwithin{figure}{section}
\theoremstyle{plain}
\newtheorem{thm}{\protect\theoremname}
\theoremstyle{plain}
\newtheorem{prop}[thm]{Proposition}

\newtheorem{rem}[thm]{Remark}
\newtheorem{conj}[thm]{Conjecture}

\newtheorem{lem}[thm]{\protect\lemmaname}

\newtheorem{defn}[thm]{Definition}
\newtheorem{cor}[thm]{Corollary}
\makeatother

\usepackage{babel}
\providecommand{\lemmaname}{Lemma}
\providecommand{\theoremname}{Theorem}
\providecommand{\theoremname}{Claim}
\providecommand{\theoremname}{Preposition}

\providecommand{\theoremname}{Definition}

\newcommand{\ohad}[1]{\todo[size=\tiny, color=pink]{Ohad: #1}}
\newcommand{\avichai}[1]{\todo[size=\tiny, color=red]{Avichai: #1}}

\newcommand{\Sn}{S_n}

\newcommand{\E}{\mathbf{E}}
\newcommand{\Emb}{\mathrm{Emb}}

\newcommand{\Irr}{\mathrm{Irr}}

\newcommand{\NN}{\mathbb{N}}

\newcommand{\cyc}{\operatorname{cyc}}
\newcommand{\Src}{\operatorname{Src}}

\newcommand{\fix}{\mathrm{fix}}
\title[Covering numbers of conjugacy classes]{
The Robust Thompson's Conjecture for Alternating Groups}

\author{Nathan Keller}
\thanks{Department of Mathematics, Bar-Ilan University. \texttt{Nathan.Keller@biu.ac.il}. Supported by the Israel Science Foundation (grants no.~2669/21 and 2456/25) and by the Binational US-Israel Science Foundation (grant no.~2024120).}
\author{Noam Lifshitz}
\thanks{Einstein Institute of Mathematics, Hebrew University. \texttt{noamlifshitz@gmail.com}. Supported by the European Research Council (StG no.~101163794), by the Israel Science Foundation (grant no.~1980/22), and by the Binational US-Israel Science Foundation (grant no.~2024120).}
\author{Avichai Marmor}
\thanks{Department of Mathematics, Bar-Ilan University. \texttt{avichai@elmar.co.il}. Partially supported by the Israel Science Foundation (grants no.\ 2669/21 and 2456/25) and by the Bar Ilan University's Presidential Scholarship.}

\author{Ohad Sheinfeld}
\thanks{Einstein Institute of Mathematics, Hebrew University. \texttt{oshenfeld@gmail.com}. Partially supported by the European Research Council (StG no.~101163794).}
\begin{document}

\maketitle

\begin{abstract}
We show that there exists a constant $\epsilon > 0$ such that if $C$ is a conjugacy class in $A_n$ of size at least $|A_n|^{1-\epsilon}$, then $A_n \setminus \{1\} \subseteq C^2$. This proves the robust version of Thompson's conjecture for the alternating group, conjectured by Shalev (Annals of Math., 2009). Our proof combines character bounds and combinatorial cancellation techniques with the recent theory of hypercontractivity for functions over symmetric groups.
\end{abstract}

\section{Introduction}

For a group $G$ and a subset $A \subset G$ that generates $G$, the \emph{covering number} of $A$ is the smallest $k$ such that $A^k=G$. The study of covering finite groups by powers of conjugacy classes has a rich history, going back to the  observation of Gleason in the early 60s that each element of $A_n$ can be written as the product of two maximal cycles~\cite{husemoller1962ramified}. A central direction
in this study is determining the conjugacy classes $C$ whose covering number is $2$, that is, whose square covers the entire group. This direction was motivated by the celebrated Thompson's conjecture~\cite{Kourovka}, which asserts that every non-abelian finite simple group $G$ contains a conjugacy class $C$ such that $C^2 = G$. 

For several decades, the study of covering numbers in finite simple groups focused on analyzing specific structural families of conjugacy classes using combinatorial techniques (e.g.,~\cite{arad1985products,bertram1972even,brenner1978covering,herzog2008covering,vishne1998mixing}). These exact covering questions have become closely related to various major research directions in finite simple groups. In particular, the tools developed to study covering numbers and products of conjugacy classes
played a main role in bounding the diameters of Cayley graphs (\cite{lawther1998diameter,liebeck2001diameters}) and in resolving extremal Waring-type problems~\cite{liebeck2010ore,larsen2011waring}.

Despite the extensive research, showing that $C^2=G$ for 
large families of conjugacy classes $C$ remained out of reach. The situation changed in 2008 with two breakthrough works of Larsen and Shalev~\cite{larsen2009word,larsen2008characters} 

which obtained such covering results for most conjugacy classes in $A_n$ at once.

In their work on Waring-type problems in finite simple groups~\cite{larsen2009word}, Larsen and Shalev showed that for all sufficiently large $n$, every permutation $\sigma\in S_n$ with at most $n^{1/128}$ cycles satisfies $(\sigma^{S_n})^2=A_n.$
Since a randomly chosen  $\sigma \in S_n$ has  
$O(\log n)$ cycles and satisfies $\sigma^{S_n}=\sigma^{A_n}$ with a probability that tends to $1$ as $n \to \infty$, this implies that in $A_n$, for \emph{almost every} $\sigma$, the covering number of $\sigma^{A_n}$ is $2$.
The main tool in the proof is character bounds, obtained using results from representation theory. In~\cite{larsen2008characters}, Larsen and Shalev sharpened their 
character bounds and used them to show that $(\sigma^{S_n})^2=A_n$ holds for any $\sigma \in S_n$ with at most $n^{1/4-\epsilon}$ cycles. 

The fundamental results of Larsen and Shalev show that in $A_n$, a \emph{robust} version of Thompson's conjecture holds: Not only there exists a conjugacy class whose square covers the entire group, but actually, every sufficiently large conjugacy class has this property. This naturally gives rise to the quantitative question: \textit{How large should a conjugacy class be to guarantee that its square covers the entire group?}   

The following conjecture that addresses this question was proposed by Shalev.
\begin{conj}\cite[Conjecture~10.3]{shalev2009word}
\label{conj:main}
There exists an absolute constant $\epsilon>0$ such that for every finite simple group $G$ and every conjugacy class $C$ of $G$, we have 
\[
|C^{2}| \ge \min\{|C|^{1+\epsilon}, |G|-\delta\},
\]
where $\delta=0$ if $C^{-1}=C$ and $\delta=1$ otherwise.
\end{conj}
The `growth' statement of the conjecture, which essentially asserts that $|C^2|$ is much larger than $|C|$ for every `not very large' conjugacy class $C$, was proved by Gill, Pyber, Short and Szab\'{o}~\cite[Proposition~5.2]{gill2013product}. The `covering' statement, which is a robust version of Thompson's conjecture asserting that $|C^2|=|G|-\delta$ holds for any conjugacy class with $|C|>|G|^{1-\epsilon}$, remained open. A related robust covering conjecture regarding unions of conjugacy classes was raised and discussed by Shalev in~\cite[Conjecture~1.5]{shalev2023covering}. 

In recent years, tremendous progress has been made in the study of covering numbers, and more generally, products of conjugacy classes (see, e.g., \cite{lifshitz2023bounds, improved_ls, cutoff_profiles_symmetric_groups}).
Most notably, Thompson's conjecture was proved by Larsen and Tiep~\cite{larsentiepuniform,LT24} for all sufficiently large finite non-abelian simple groups, and by Liao, Wang and Zhang~\cite{LWZ26} for all finite non-abelian simple groups. 
At the same time, there was no significant progress toward a robust version. In $A_n$, there were only two advancements toward Shalev's conjecture since the seminal result of Larsen and Shalev~\cite{larsen2008characters} from 2008. The first 
is~\cite{KLS23}, which asserts that $(\sigma^{S_n})^2=A_n$ holds for any $\sigma \in A_n$ with at most $n^{2/5-\epsilon}$ cycles. The second is a very recent result of Dona~\cite{dona2025}
on products of three conjugacy classes in $A_n$ which implies that if $|C|>|A_n|^{1-\epsilon}$, then $C^2$ covers any conjugacy class of size at least $|A_n|^{1-\epsilon}$. 


\subsection*{Our results}

In this paper we prove the robust Thompson's conjecture in $A_n$.
\begin{thm}\label{thm:main}
    There exists an absolute constant $\epsilon > 0$ such that the following holds for all sufficiently large $n$. If $C$ is a conjugacy class of $A_n$ of size $|C| \ge (n!)^{1-\epsilon}$, then
    $A_n \setminus \{1\} \subseteq C^2$.
\end{thm}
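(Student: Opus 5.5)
The plan is the Frobenius formula combined with a split into two regimes for the target element $\tau\in A_n\setminus\{1\}$. For $C=\sigma^{A_n}$ with $|C|\ge (n!)^{1-\epsilon}$, the permutation $\sigma$ has few short cycles. Indeed, if $\sigma$ has $c_i$ cycles of length $i$, then $|C|\approx n!/\prod_i i^{c_i}c_i!$. The size condition therefore forces $\sum_i c_i\log(i\,c_i)\lesssim \epsilon n\log n$. In particular $\sigma$ has at most $O(\epsilon n)$ fixed points, and at most $n^{O(\epsilon)}\cdot$(small) cycles of bounded length in an averaged sense. We would first reduce to the case $\sigma^{S_n}=\sigma^{A_n}$. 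When the $S_n$-class splits, $\sigma$ has distinct odd cycle lengths, hence $O(\sqrt n)$ cycles, and the result follows from the Larsen--Shalev theorem~\cite{larsen2008characters}. Then
\[
N(\tau):=\#\{(x,y)\in C^2: xy=\tau\}=\frac{|C|^2}{|A_n|}\sum_{\chi\in\Irr(A_n)}\frac{\chi(\sigma)^2\overline{\chi(\tau)}}{\chi(1)},
\]
and it suffices to show that the nontrivial terms sum to less than $1$ in absolute value.

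\textbf{Case 1: $\tau$ has $|\tau^{S_n}|\ge (n!)^{1-\delta}$ for a suitable $\delta\gg\epsilon$.} Here we follow the Larsen--Shalev strategy with improved character bounds. We would use Cauchy--Schwarz,
\[
\sum_{\chi\ne 1}\frac{|\chi(\sigma)|^2|\chi(\tau)|}{\chi(1)}\le \max_{\chi\ne 1}\frac{|\chi(\tau)|}{\chi(1)}\sum_\chi |\chi(\sigma)|^2 = \frac{|A_n|}{|C|}\max_{\chi\neq1}\frac{|\chi(\tau)|}{\chi(1)},
\]
which is not quite enough on its own. Instead we would split characters by degree. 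For characters with $\chi(1)\ge n^{cn}$ (i.e. $\lambda$ with first row and column $\le (1-\eta)n$), the Larsen--Shalev bounds $|\chi(\sigma)|\le\chi(1)^{1-c'}$ hold, and $\sum\chi(1)^{-s}$ is small (Liebeck--Shalev zeta bounds); these together control the tail. For low-degree characters, $\lambda$ has a long first row or column, and $\chi(\sigma)$ is governed by the fixed-point/short-cycle statistics of $\sigma$. Those statistics are small by the size condition, so $\chi(\sigma)$ is tiny.

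\textbf{Case 2: $\tau$ lies in a small class.} Here $\tau$ has many fixed points or short cycles, e.g. $\tau$ a 3-cycle, which is the genuinely hard regime. Character bounds on $\chi(\tau)$ are useless, since $|\chi(\tau)|/\chi(1)$ is close to $1$ for low-degree $\chi$. Our plan is to use the hypercontractivity/level-$d$ inequalities for global functions on $S_n$. Consider the indicator $f=1_C$ and decompose it into levels. The convolution $f*f$ evaluated at $\tau$ equals $\E[f*f]$ plus contributions from the degree-$d$ parts. The small-level parts are controlled because $f$ is ``global'' (few fixed points means restrictions to dictators $x(i)=j$ have density close to average), so hypercontractivity gives $\|f^{=d}\|_2^2\le (C\log(1/\mu)/d)^d\mu^2$ for $d\le cn$. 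High levels are handled by the Case~1 character bounds applied to $\sigma$ alone, using $|\chi(\tau)|\le\chi(1)$. The combinatorial cancellation enters when $\tau$ is mostly fixed points. We would write $\tau$ as a product in a small symmetric group $S_m$ on its support plus a few extra points. Conditioning $x\in C$ on its behavior on that set, we reduce to showing that a restricted class-product is nonempty, by induction on $m$ or directly by explicit constructions (Gleason-type products of long cycles).

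The main obstacle is the intermediate range in Case 2, where $\tau$ has between $n^{\Omega(1)}$ and $(1-o(1))n$ fixed points. There, neither pure character bounds nor explicit combinatorics suffices, and the precise level-$d$ inequality must beat the $(n!)^{\epsilon}$ loss coming from $|A_n|/|C|$. We would first try to bound $\sum_{\chi(1)\le n^{d}}\chi(\sigma)^2/\chi(1)$ via hypercontractive estimates on $1_{\sigma^{S_n}}$ restricted to cosets fixing $\tau$'s support, optimizing $d\approx \epsilon n$.
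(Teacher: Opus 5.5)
There is a genuine gap, and it sits under both of your cases. You assume that a class of size $(n!)^{1-\epsilon}$ has few enough fixed points to make the low-level values $\chi(\sigma)$ tiny and $1_C$ global. Only the weak bound $\fix(\sigma)=O(\epsilon n)$ actually holds.

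Take $\sigma$ to be a $(1-\epsilon)n$-cycle (of odd length) together with $\epsilon n$ fixed points. For small $\epsilon$ and large $n$ this satisfies $|\sigma^{A_n}|\ge (n!)^{1-\epsilon}$, yet $\chi_{(n-1,1)}(\sigma)=\epsilon n-1$. Already for a fixed-point-free $\tau$ (say an $n$-cycle, $n$ odd, where $\chi_{(n-1,1)}(\tau)=-1$), the single level-one Frobenius term is $-(\epsilon n-1)^2/(n-1)\approx-\epsilon^2 n$. For $\tau$ with many fixed points it is larger still. So bounding the nontrivial terms in absolute value cannot succeed, whatever degree splitting you use. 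The high-degree Larsen--Shalev bounds you invoke also do not help: they need $n^{o(1)}$ fixed points or small $E(\sigma)$, whereas here $E(\sigma)\ge e_1\to 1$.

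Your Case~1 is in substance Dona's theorem, which the paper simply imports (Corollary~\ref{cor:dona}); this example shows it cannot be reached by character bounds on individual terms. Case~2 fails for the same reason, on two counts:
\begin{enumerate}
\item By Lemma~\ref{lem:embcount}, restricting $C$ to $x(i_1)=i_1,\dots,x(i_d)=i_d$ multiplies its density by $(\fix(\sigma))_d\approx(\epsilon n)^d$. So $1_C$ is only $r$-global with $r\approx\sqrt{\epsilon n}$.
\item Even if $r$ were $O(1)$, density $(n!)^{-\epsilon}$ gives $\log(1/\mu)\approx \epsilon n\log n$. The Keevash--Lifshitz inequality then bounds $\sum_{\text{level }d}\chi(\sigma)^2$ only by $(C\epsilon n\log n/d)^d\gg 1$. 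That cannot compensate for $|\chi(\tau)|/\chi(1)\approx 1$ when $\tau$ has many fixed points.
\end{enumerate}
Your last paragraph in effect concedes that this range is not handled. (Separately, your split-class reduction is loose. Larsen--Shalev give $(\sigma^{S_n})^2=A_n$, which does not imply the statement for a split $A_n$-class; the paper uses Larsen--Tiep, Theorem~\ref{thm:Sn_VS_An}.)

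The missing idea is to eliminate the fixed points of $\sigma$ by cancellation \emph{before} any analysis. Hypercontractivity is then applied only to non-conjugation-invariant restricted sets of \emph{polynomial} density. The paper proceeds as follows.
\begin{enumerate}
\item It cancels pairs of short cycles via Lemma~\ref{lem:2-cases}. If $\tau$ is left with few short cycles, both classes are large and Corollary~\ref{cor:dona} applies.
\item It cancels fixed points of $\sigma$ against fixed points of $\tau$ until one of them is fixed-point-free. If that is $\sigma$, then $E(\sigma)$ is small and Theorem~\ref{thm:Larsen-Shlaev} finishes. So the regime you single out, $\tau$ with many fixed points, is either easy or reduces to a fixed-point-free $\tau$.
\item Bertram's Lemma~\ref{lem:big-cycle-sigma} then burns an $\ell$-cycle of $\sigma$ together with about $\ell/3$ fixed points against short cycles of $\tau$ of total length about $4\ell/3$.
\end{enumerate}
The genuinely hard residual case is a fixed-point-free $\tau$ with many short cycles against a $\sigma$ with up to $cm$ fixed points and $O(1)$ other cycles. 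There the $2m$-gadget (Lemma~\ref{lem:small-cycle-removal}) absorbs the fixed points of $\sigma$ into umvirate restrictions matched to short cycles of $\tau$. This produces shifted sets $\mathcal I_1,\mathcal I_2\subseteq A_N$ coming from a class with only $O(1)$ cycles. These sets have density at least $N^{-B}$ and are $N^{0.01}$-global (Lemmas~\ref{lem:measure_lower_bound}, \ref{lem:measure_upper_bound} and Proposition~\ref{prop:globalness}). Proposition~\ref{prop:globalness-bounds} then gives $\|g^{=\chi}\|_2^2\le\chi(1)^{0.05}$. Combined with $|\chi(\tau')|\le\chi(1)^{1/2+o(1)}$ for the fixed-point-free remainder $\tau'$, this closes the argument via Lemma~\ref{lem:analytic-covering}.
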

An equivalent formulation of the theorem is that there exists $c>0$ such that for any $\sigma \in A_n$ with at most $cn$ cycles, we have $(\sigma^{A_n})^2\supseteq A_n \setminus \{1\}$.
This result is sharp up to the value of $c$, since Bertram~\cite{bertram1972even} showed that if $\sigma \in A_n$ consists of a single cycle of length $\ell<\lfloor \frac{3n}{4} \rfloor$ and $n-\ell$ fixed points, then $(\sigma^{A_n})^2 \nsupseteq A_n \setminus \{1\}$. In the extremal case $\ell=\lfloor \frac{3n}{4} \rfloor-1$, $\sigma$ has slightly more than $n/4$ cycles and $|\sigma^{A_n}|=|A_n|^{3/4+o(1)}$.  Two remaining open problems are to determine the maximum $\epsilon$ such that $A_n \setminus \{1\}\subseteq C^2$ is guaranteed to hold for all $|C|>|A_n|^{1-\epsilon}$, and to prove the assertion of Theorem~\ref{thm:main} for all $n$ (with a specific value of $\epsilon$).

We note that as was recently observed in~\cite{Sheinfeld26}, Shalev's conjecture does not hold in full 
generality, since for any $\epsilon>0$ and for every sufficiently large even $n$, there exist conjugacy classes $C \subset PSL_n(2)$ such that $|C|>|PSL_n(2)|^{1-\epsilon}$, and yet $C^2 \not \supseteq PSL_n(2) \setminus \{1\}$.
This follows from an old result of Uhlig~\cite[Lemma~2]{Uhlig79}; see~\cite[Lemma~3.2]{Nielsen26} for a short proof. 
More generally, this construction and variants of the constructions 
in~\cite[Sections~6-7]{Sheinfeld26} yield counterexamples to the
`covering' part of Shalev's conjecture for finite simple groups of each classical Lie type, along sequences of finite simple groups whose ranks
tend to infinity.

\subsection*{The techniques used in the proof}
Our proof combines classical character bounds, combinatorial cancellation, and techniques from discrete Fourier analysis --- specifically, hypercontractivity for functions over symmetric groups. The two first components, character bounds and combinatorial cancellation, were used by Larsen and Shalev~\cite{larsen2008characters}. Thus, we begin with presenting them in the way they were used in~\cite{larsen2008characters} and then we explain the additional elements required for obtaining our results.

\subsubsection*{Character Bounds.}
The classical tool for this kind of problems is the Frobenius character formula. For a finite group $G$ and elements $\sigma, \tau \in G$, the number of ways to write $\tau$ as a product of two elements in the conjugacy class $C = \sigma^G$ is given by
\begin{equation} \label{eq:frobenius}
\frac{|C|^2}{|G|} \cdot \sum_{\chi \in \Irr(G)} \frac{\chi(\sigma)^2 \overline{\chi(\tau)}}{\chi(1)}.
\end{equation}
To prove that $\tau \in C^2$, it suffices to show that the contribution of the non-trivial irreducible characters in \eqref{eq:frobenius} is strictly less in absolute value than the contribution of the trivial character (which is $1$). 
Over the past three decades, a vast machinery of character bounds has been developed by Liebeck, Shalev, Larsen, Tiep and others to estimate these sums, in different finite simple groups. In particular, the foundational work of Larsen and Shalev~\cite{larsen2008characters} shows that if $\sigma$ has at most $n^{1/4-\epsilon}$ cycles,
the ratio $\chi(\sigma)^2/\chi(1)$ decays polynomially in the dimension of the character. However, this in itself is not sufficient, since  
$\tau$ may have extremely large character values compared to the dimension $\chi(1)$. This is the case when $\tau$ has many small cycles. Hence, an additional ingredient is needed.

\subsubsection*{Cycle Cancellation}
A standard approach to complement the character method is a combinatorial strategy called \emph{cycle cancellation}. Any permutation decomposes into disjoint cycles. Suppose $\tau$ contains a ``problematic'' subset of cycles (e.g., a small number of fixed points or small cycles) acting on a subset of coordinates $\Omega \subset [n]$. In a cancellation step, one finds $\sigma_1, \sigma_2 \in \sigma^{A_n}$ such that $\sigma_1(\Omega)=\Omega, \sigma_2(\Omega)=\Omega$, and  $\sigma_1 \sigma_2 |_\Omega = \tau |_\Omega$. The corresponding cycles can then be ``canceled'', reducing the problem to covering $\tau|_{[n] \setminus \Omega}$ by $(C')^2$, where $C'$ is the conjugacy class in $A_{n-|\Omega|}$ corresponding to the cycle structure of $\sigma$ without the removed cycles.
Larsen and Shalev~\cite{larsen2008characters} showed that 
for each $\tau$ one can devise a  cancellation such that the reduced problem can be solved by character bounds.

\subsubsection*{Difficulties that arise for smaller conjugacy classes $C$.} The method described above works well for `large' conjugacy classes, i.e., classes that correspond to permutations with at most $n^{1/4-\epsilon}$ cycles. When we consider permutations with a much larger number of cycles, say $\sqrt{n}$, the term 
$\chi(\sigma)^2/\chi(1)$ may be larger than $1$, and thus, the character bound cannot be applied even if $\overline{\chi(\tau)}$ is not large. We overcome this by a combinatorial cancellation, this time using cycles of $\tau$ to cancel ``problematic'' cycles in $\sigma$. However, such a cancellation is not always possible; it appears to fail when $\sigma$ and $\tau$ are `imcompatible', e.g., when $\sigma$ has many fixed points and a few long cycles and $\tau$ has many small cycles whose total size is smaller than the size of each long cycle in $\sigma$, and a few long cycles. These cases cannot be handled by the cycle cancellation results in Dona's recent paper~\cite{dona2025} as well.
Therefore, it seems that an additional ingredient is needed.

\subsubsection*{Globalness and Hypercontractivity}
To overcome the difficulties, we introduce a different type of cancellation, in which $\sigma_1 \sigma_2|_{\Omega} =\tau|_{\Omega}$ as above, but $\sigma_1,\sigma_2$ do not preserve $\Omega$. The reduced problem obtained from the cancellation does not pertain to conjugacy classes in $S_{[n] \setminus \Omega}$, but rather to subsets of the set of bijections from $[n] \setminus \Omega$ to $[n] \setminus \Omega'$, for some $\Omega' \subset [n]$ with $|\Omega'|=|\Omega|$. This set of bijections can be combinatorially identified with $S_{[n] \setminus \Omega}$, but the structure of conjugacy classes is lost, so character bounds cannot be applied anymore. We replace them with an analytic tool --- hypercontractivity for global functions over the symmetric group, developed by Keevash and Lifshitz~\cite{keevash2023sharp} (see also~\cite{filmus2020hypercontractivity}). 

Consider the characteristic function
of the reduced set $C' \subset S_{[n]\setminus\Omega}$, which is no longer a conjugacy class. We decompose this function according to the irreducible representations: for each irreducible representation $\rho$, we consider the part of $1_{C'}$ lying in the span of the matrix coefficients of $\rho$. In the conjugacy class setting, this part is a scalar multiple of the corresponding character. Here, it is a linear combination of the matrix coefficients of $\rho$. The hypercontractive inequality bounds the $L^2$-norm of this part of $1_{C'}$ for each $\rho$, and this replaces the character bounds used by Larsen and Shalev. 
However, the inequality holds only for functions that are \emph{global} -- i.e., functions for which no restriction to the family $\{\sigma \in A_n:\sigma(i_1)=j_1,\ldots,\sigma(i_t)=j_t\}$ for a small $t$ changes the expectation significantly. Hence, we develop a combinatorial argument to show that the functions achieved after the application of cancellation are global, and then we apply the hypercontractive inequality to complete the argument.

\subsubsection*{Relation to previous work} In addition to the methods described above, we use the recent result of Dona~\cite{dona2025} to deduce that if both $\sigma$ and $\tau$ have at most $c'n$ cycles for a sufficiently small constant $c'$, then $\tau$ is covered by $(\sigma^{A_n})^2$. Indeed, in this case $|\sigma^{A_n}|>|A_n|^{1-\epsilon'}$, and thus by Dona's result, $(\sigma^{A_n})^2$ covers every conjugacy class of size at least $|A_n|^{1-\epsilon'}$.

The idea of using hypercontractivity for our problem was proposed in the paper~\cite{KLS23}, written by a subset of the authors of this paper. There, it was used to show that  $C^2\supseteq A_n \setminus \{1\}$ holds for any conjugacy class $C=\sigma^{A_n}$ where $\sigma$ has at most $n^{2/5-\epsilon}$ cycles, improving the $n^{1/4-\epsilon}$ bound of Larsen and Shalev~\cite{larsen2008characters} but still remaining far from the linear regime. In this paper, we develop this tool to its full strength, to handle the conjugacy classes of all permutations with at most $cn$ cycles, for a universal constant $c$.

\subsection*{Organization of the paper} In Section~\ref{sec:previous} we introduce definitions and previous results that will be used throughout the paper. In Section~\ref{sec:globalness} we study restrictions of conjugacy classes to families of the form $\{\sigma \in A_n:\sigma(i_1)=j_1,\ldots,\sigma(i_t)=j_t\}$ and establish the globalness statements required in the proof of Theorem~\ref{thm:main}. In Section~\ref{sec:proof} we  present the hypercontractivity argument and the proof of Theorem~\ref{thm:main}.

\section{Definitions and Previous Results}
\label{sec:previous}

In this section we present definitions and notations that will be used throughout the paper and previous results we make use of. 

\subsubsection*{Notations.} Throughout, $c,c_1,c_2,\ldots$~denote constants that are assumed to be `sufficiently small', and 
$C,C_1,C_2,\ldots$~denote constants that are assumed to be sufficiently large.

\subsection{Character bounds}

We make significant use of several character bounds from works of Larsen, Liebeck, Shalev, and Tiep.

The first three results we use are from the aforementioned work of Larsen and Shalev~\cite{larsen2008characters} which proved (among various other results) that for any $\sigma \in A_n$ with at most $n^{1/4-\epsilon}$ cycles, $(\sigma^{A_n})^2 \supseteq A_n \setminus \{1\}$. We shall need the following notion defined in~\cite{larsen2008characters}.

\begin{defn}
    For $\sigma \in S_n$, let $f_\sigma(i)$ be the number of $i$-cycles in the cycle decomposition of $\sigma$. The \emph{orbit growth sequence} of $\sigma$, $e_1,\ldots,e_n$, is defined via the equality
    \[
    e_1+\ldots+e_k=\max \left(\frac{\log(\sum_{i=1}^k i \cdot f_\sigma(i))}{\log n},0 \right),
    \]
    for each $k=1,2,\ldots,n$. The function $E(\sigma)$ is defined as $E(\sigma)=\sum_{i=1}^n \frac{e_i}{i}$.
\end{defn}

\begin{thm}[\cite{larsen2008characters}, Corollary~1.11]
\label{thm:Larsen-Shlaev}
For any $\epsilon>0$, there exists $N$ such that for all $n\ge N$ and all $\sigma\in S_{n}$, 
$E(\sigma)\le 1/4-\epsilon$ implies $(\sigma^{S_{n}})^{2}=A_{n}$.
\end{thm}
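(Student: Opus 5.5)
The plan is to reconstruct the Larsen--Shalev argument. Fix $\tau \in A_n$; we must show $\tau \in (\sigma^{S_n})^2$. By the Frobenius formula \eqref{eq:frobenius} for $G=S_n$ and $C=\sigma^{S_n}$, it suffices to prove
\[
\Bigl|\sum_{\chi \neq 1,\mathrm{sgn}} \frac{\chi(\sigma)^2\chi(\tau)}{\chi(1)}\Bigr| < 2 .
\]
The sign character contributes $+1$ because $\tau$ is even and $\mathrm{sgn}(\sigma)^2=1$. Thus the trivial and sign characters together contribute $2$.

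\textbf{Character bound for $\sigma$.} The key input, proved in \cite{larsen2008characters} via the Murnaghan--Nakayama rule and a careful count of rim hooks, is
\[
|\chi(\sigma)| \le \chi(1)^{E(\sigma)+o(1)} \qquad \text{for all } \chi\in\Irr(S_n).
\]
So under $E(\sigma)\le 1/4-\epsilon$ we get $\chi(\sigma)^2/\chi(1) \le \chi(1)^{-1/2-2\epsilon+o(1)}$.

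\textbf{Case 1: $\tau$ has few cycles.} Bound $|\chi(\tau)|$ trivially by $\chi(1)$ when $\tau$ is arbitrary. This gives a sum of $\chi(1)^{1/2-2\epsilon}$, which is too big, so the trivial bound alone does not suffice. Instead we also apply the same $E$-bound to $\tau$ when $E(\tau)$ is small, obtaining terms $\chi(1)^{-1/2-2\epsilon+E(\tau)+o(1)}$. We then use the Liebeck--Shalev zeta function estimate: $\sum_{\chi\ne 1,\mathrm{sgn}}\chi(1)^{-s}\to 0$ for every fixed $s>0$. This finishes whenever $E(\tau) < 1/2$.

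\textbf{Case 2: $E(\tau)$ large.} Here $\tau$ has many short cycles, and we use cycle cancellation. We choose a set $\Omega$ that is a union of some cycles of $\sigma$, together with a matching union of short cycles of $\tau$ of the same total size. We realize $\tau|_\Omega$ as a product of two permutations of $\Omega$ with the cycle type of $\sigma|_\Omega$; for a single long cycle this follows by elementary constructions such as Gleason's observation. The restricted permutations $\sigma',\tau'$ on $[n]\setminus\Omega$ then have $E(\sigma')\le 1/4-\epsilon/2$ and $E(\tau')<1/2$, and Case~1 applies to them. Parity must be adjusted so that $\tau'$ is even, which is done by moving one cycle between the two parts.

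\textbf{Main obstacle.} The hardest part is the monotonicity of $E$ under deleting cycles. Removing points shrinks $n$ and shifts the orbit growth sequence, so we must carefully choose which cycles of $\sigma$ to absorb, preferably long ones. We also need uniform control of the $o(1)$ terms. I would first try taking $\Omega$ to contain all fixed points and short cycles of $\tau$ up to a threshold length $n^{\delta}$. We then check that $\log$ of the truncated cumulative counts changes by $o(\log n)$.
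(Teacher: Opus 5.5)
The paper does not prove this statement; it imports it from Larsen--Shalev. So I judge your outline on its own terms.

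\textbf{Case~1 is fine, and covers more than you claim.} Since $e_i\ge 0$ and $\sum_i e_i=1$, one has $E(\tau)\le \tfrac{1+e_1(\tau)}{2}$. Hence $2E(\sigma)+E(\tau)\le 1-\epsilon$ as soon as $\tau$ has at most $n^{2\epsilon}$ fixed points. Only fixed points of $\tau$ ever need cancelling. Aiming for $E(\tau')<1/2$ by putting all cycles of length $\le n^{\delta}$ into $\Omega$ is unnecessary, and it makes the size matching harder.

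\textbf{The genuine gap is Case~2.} This is where all the content lies, and you leave it as a sketch. Your cancellation needs two things:
\begin{enumerate}
\item a union of $\sigma$-cycles whose total length equals exactly that of a union of $\tau$-cycles containing almost all fixed points of $\tau$;
\item a covering statement on $\Omega$.
\end{enumerate}
The only covering you supply (Gleason/Bertram) requires $\sigma|_\Omega$ to be a single cycle. Otherwise the problem on $\Omega$ is the original one, with $\tau|_\Omega$ dominated by fixed points. For fixed points the natural move is $x\cdot x^{-1}=1$: it cancels any union of $\sigma$-cycles of total size $s\le f$ against $s$ fixed points of $\tau$. Even then you must control the leftover $f-s$ relative to the new degree, not only the renormalisation of $E(\sigma')$.

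\textbf{An instance your scheme cannot reach.} Let $n=2m$ with $m$ even. Let $\sigma$ be a product of two disjoint $m$-cycles, so $E(\sigma)=1/m$. Let $\tau$ be an $(m+1)$-cycle with $m-1$ fixed points.
\begin{itemize}
\item \emph{No cancellation exists.} Unions of $\sigma$-cycles have size $m$ or $2m$, and no union of $\tau$-cycles has size $m$. So the only admissible $\Omega$ is $[n]$ itself, where $\sigma|_\Omega$ is not a single cycle.
\item \emph{Case~1 fails.} Here $2E(\sigma)+E(\tau)=1-\Theta(1/\log n)$, and this is not merely a weakness of the $o(1)$. For $\lambda=(2m-k,1^k)$ with $1\le k\le m-2$, Murnaghan--Nakayama gives $\chi^\lambda(\sigma)=(-1)^k$ and $\chi^\lambda(\tau)/\chi^\lambda(1)=\binom{m-2}{k}/\binom{2m-1}{k}\approx 2^{-k}$. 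These hooks and their conjugates therefore contribute $2-o(1)$, all with positive sign, to $\sum_{\chi\ne 1,\mathrm{sgn}}\chi(\sigma)^2\chi(\tau)/\chi(1)$. Your Case~1 mechanism, which makes this sum $o(1)$ via the zeta-function estimate, cannot apply.
\end{itemize}

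So the reduction ``cancel, then apply Case~1'' cannot reach this $\tau$, although the theorem asserts it is covered. Handling $\sigma$ with few long cycles against $\tau$ with many fixed points needs an ingredient your proposal does not contain. Possible examples are an explicit combinatorial construction, a sign-sensitive use of the character sum, or a cancellation in which the two factors do not preserve $\Omega$.
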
 


\begin{thm}[\cite{larsen2008characters}, Theorem~1.2]\label{lem:larsen-shalev-1}
Let $\sigma\in S_{n}$ and $\chi\in \mathrm{Irr}(S_{n})$. If $\sigma$ is fixed-point-free, or has $n^{o(1)}$ fixed points, then
    $$|\chi(\sigma)|\le\chi(1)^{1/2+o(1)}.$$
\end{thm}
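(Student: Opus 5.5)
This statement is quoted from Larsen--Shalev (Theorem~1.2 of~\cite{larsen2008characters}), so within the paper it is simply invoked. Still, here is how I would prove it.

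The plan is to bound $|\chi(\sigma)|$ by a Murnaghan--Nakayama type argument that exploits the small number of fixed points. Write $\chi=\chi^\lambda$, and let $\sigma$ have $f=n^{o(1)}$ fixed points and cycles of lengths $\ell_1\ge\ell_2\ge\dots$. Apply the Murnaghan--Nakayama rule, removing rim hooks of the cycle lengths one at a time. This writes $\chi^\lambda(\sigma)$ as a signed sum over sequences of rim-hook removals. Hence $|\chi^\lambda(\sigma)|$ is at most the number of such sequences times the maximal value of the terminal character, which is $\chi^\mu(1)$ with $|\mu|=f$ once all non-trivial cycles are stripped. The terminal factor is at most $\sqrt{f!}=n^{o(n^{o(1)})}$. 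This is negligible against $\chi(1)^{o(1)}$ except when $\chi(1)$ is tiny; that regime, where $\lambda$ has a first row or column of length $n-O(1)$, is handled by the explicit polynomial formulas for characters of bounded depth.

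The core estimate is to compare the number of rim-hook removal sequences with $\chi(1)=f^\lambda$, the number of standard Young tableaux. I would group the cycles of length $k$ and use the $k$-core/$k$-quotient decomposition: the number of ways to strip $m_k$ hooks of length $k$ is bounded by the number of standard tableaux of the $k$-quotient. That count is roughly $\bigl(f^{\lambda}\bigr)^{1/k}$ times a subexponential factor, via the hook length formula. Summing the exponents $\sum_k (k m_k)/(k n)$ weighted appropriately gives total exponent $\sum_k m_k/n$, the proportion of cycles. The main term therefore behaves like $(f^\lambda)^{\text{(number of cycles)}/n}$, which is far better than needed.

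This is too optimistic, because for $k=2$ the bound is only $1/2$, and indeed that is where the exponent $1/2$ arises. More precisely, Larsen--Shalev's approach uses $\chi(\sigma)^2\le |C_{S_n}(\sigma)|$-type estimates combined with the orbit growth sequence, and $E(\sigma)\le 1/2+o(1)$ whenever $\sigma$ has $n^{o(1)}$ fixed points: we have $e_1=o(1)$, and $\sum_{i\ge2}e_i/i\le \frac12\sum e_i\le\frac12$. I would therefore prove the general bound $|\chi(\sigma)|\le \chi(1)^{E(\sigma)+o(1)}$ by induction along the cycle lengths, using the hook counting above at each level $k$. The contribution of the cycles of length exactly $k$ is controlled by the ratio of the number of $k$-hook strippings to $\dim$, raised to the power $1/k$, and the weights $e_k$ are precisely the incremental logs of the support sizes. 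The fixed-point corollary is then immediate.

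\textbf{Main obstacle.} The difficult step is making the $k$-quotient counting uniform in $\lambda$, that is, showing that the number of $k$-rim-hook strippings is at most $(f^\lambda)^{1/k+o(1)}$ for all shapes, including very unbalanced ones with a long first row. I would first try to handle this with the hook length formula and a comparison $\prod_{h\in\lambda}h \le \bigl(\prod_{h\equiv0 \pmod k}h\bigr)^{k}\cdot e^{o(n\log n)}$.
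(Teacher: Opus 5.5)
The paper does not prove this statement; it imports it verbatim from Larsen--Shalev. Your closing deduction is sound: with $n^{o(1)}$ fixed points, $e_1=o(1)$, and since the $e_i$ are nonnegative with $\sum_i e_i=1$, you get $E(\sigma)\le e_1+\frac12\sum_{i\ge 2}e_i\le\frac12+o(1)$. This is the case $m=2$ of Lemma~\ref{thm:Larsen-shalev-small-cycles}, so simply citing Larsen--Shalev's general bound $|\chi(\sigma)|\le\chi(1)^{E(\sigma)+o(1)}$ would finish.

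What you offer as a proof of that general bound, however, has a genuine gap. You assert that the weights $e_k$ are the increments of $\log n_k/\log n$, where $n_k$ is the number of points in cycles of length $\le k$, but nothing in your argument explains why. Suppose you strip cycles in decreasing order of length. Then the shape left after all cycles longer than $k$ are removed is some $\mu_k\subseteq\lambda$ with $|\mu_k|=n_k$. Roughly, you need two estimates. The first is a skew (nonempty-core) Fomin--Lulov bound: the number of partial $k$-strippings $\mu_k\to\mu_{k-1}$ is at most $(\chi^{\mu_k}(1)/\chi^{\mu_{k-1}}(1))^{1/k}$, up to $\chi^\lambda(1)^{o(1)}$. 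The second, which is the missing idea, is a sub-diagram bound $\chi^{\mu}(1)\le\chi^{\lambda}(1)^{\log|\mu|/\log n+o(1)}$ for $\mu\subseteq\lambda$. Abel summation over $k$ then gives exactly $\sum_k e_k/k$.

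Your ``share proportional to the number of boxes'' heuristic fails because a sub-diagram can carry a fraction of order $\log|\mu|/\log n$, not $|\mu|/n$, of $\log\chi^\lambda(1)$. This has nothing to do with $k=2$ being special. Let $f=q^2\approx\sqrt n$ with $q$ even, $\mu=(q^q)$, $\lambda=(n-f+q,q^{q-1})$, and let $\sigma$ be an $(n-f)$-cycle times $f/2$ disjoint transpositions. The only $(n-f)$-rim hook of $\lambda$ is the tail of its first row. Hence $|\chi^\lambda(\sigma)|$ is the number of domino tableaux of $\mu$, which is $\chi^\mu(1)^{1/2+o(1)}=\chi^\lambda(1)^{1/6+o(1)}$. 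Yet $\sigma$ is fixed-point-free and its proportion of cycles is $O(n^{-1/2})$; the value is consistent with $E(\sigma)\approx\frac14$.

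The second gap is uniformity, which you flag but whose proposed remedy cannot work. Every loss must be $\chi^\lambda(1)^{o(1)}$ uniformly in $\lambda$. An error factor that is merely $e^{o(n\log n)}$ in the hook-product comparison gives nothing for characters of level $d=o(n)$, since for those $\log\chi^\lambda(1)\le d\log n$ is itself $o(n\log n)$. The same issue affects the polynomial prefactors accumulated over as many as $\sqrt{2n}$ distinct cycle lengths, and the sum over intermediate shapes, which must be controlled using $\mu_k\subseteq\lambda$. For empty $k$-core, $\prod_h h\le(\prod_{k\mid h}h)^k$ holds with no error, being equivalent to the Fomin--Lulov bound, so the real difficulty lies entirely in the skew, nonempty-core case.

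Your treatment of the terminal factor has the same flaw. With $f=n^{o(1)}$ fixed points, $\sqrt{f!}$ exceeds $\chi(1)^{\epsilon}$ for all characters of level up to order $f\log f/(\epsilon\log n)$. This can tend to infinity (e.g.\ $f=e^{\sqrt{\log n}}$), so bounded-depth character formulas do not cover the exceptional range. The correct bound there is again the sub-diagram estimate, which gives $\chi^\mu(1)\le\chi^\lambda(1)^{\log f/\log n+o(1)}=\chi^\lambda(1)^{o(1)}$ for $\mu\subseteq\lambda$ with $|\mu|=f$.
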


\begin{lem}[\cite{KLS23}, Lemma~2.6]\label{thm:Larsen-shalev-small-cycles}
For any $\epsilon>0$ and any $m \in \mathbb{N}$, there exist $\delta>0$ and $n_0 \in \mathbb{N}$ such that the following holds. Let $n>n_0$ and suppose that $\sigma\in S_n$ has at most $n^{\delta}$ $i$-cycles for each $i < m$. Then $E(\sigma) \le 1/m +\epsilon.$   
\end{lem}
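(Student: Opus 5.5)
The plan is to bound $E(\sigma)=\sum_{i=1}^n e_i/i$ directly from the definition, using the fact that the sequence $e_1,e_2,\ldots$ consists of nonnegative increments. Write $S_k=e_1+\cdots+e_k=\max\bigl(\log(\sum_{i\le k} i f_\sigma(i))/\log n,0\bigr)$. Since $\sum_{i\le k} i f_\sigma(i)$ is nondecreasing in $k$ and bounded by $n$, the $S_k$ are nondecreasing with $0\le S_k\le 1$, so every $e_i\ge 0$ and $S_n\le 1$.

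First I split the sum at $m$. For $i\ge m$ we have $e_i/i\le e_i/m$, so
\[
\sum_{i\ge m} \frac{e_i}{i}\le \frac{1}{m}\sum_{i\ge m} e_i\le \frac{S_n}{m}\le \frac1m .
\]
For $i<m$ we use the hypothesis. The number of points lying in cycles of length less than $m$ is
\[
\sum_{i<m} i f_\sigma(i)\le \sum_{i<m} i\, n^{\delta}\le m^2 n^{\delta}.
\]
Hence
\[
S_{m-1}\le \frac{\log(m^2 n^\delta)}{\log n}=\delta+\frac{2\log m}{\log n}.
\]
Since each $1/i\le 1$, this gives
\[
\sum_{i<m} \frac{e_i}{i}\le S_{m-1}\le \delta+\frac{2\log m}{\log n}.
\]
If the sum inside the logarithm is $0$ or $1$, then $S_{m-1}=0$ and the bound holds trivially.

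Combining the two parts,
\[
E(\sigma)\le \frac1m+\delta+\frac{2\log m}{\log n}.
\]
Choose $\delta=\epsilon/2$, and choose $n_0$ so that $2\log m/\log n_0<\epsilon/2$. Then $E(\sigma)\le 1/m+\epsilon$ for all $n>n_0$.

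There is no real obstacle here. The only point needing care is that the $e_i$ are nonnegative, which is what allows the tail to be bounded by $S_n/m$. This follows from monotonicity of the partial sums together with the $\max(\cdot,0)$ in the definition.
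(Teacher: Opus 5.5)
Your argument is correct. Bounding $\sum_{i<m} e_i/i \le S_{m-1} \le \delta + 2\log m/\log n$ via the hypothesis, and the tail by $(S_n - S_{m-1})/m \le 1/m$ using $e_i \ge 0$, is the natural direct proof; the paper does not prove this lemma itself but imports it from \cite{KLS23}. Your computation also gives the sharper bound $E(\sigma) \le \frac{1}{m} + S_{m-1}$, which is the quantitative form the main proof actually uses, e.g.\ $E(\sigma_2) \le 1/C_2 + O(1/\log n_2)$ when only $O(1)$ points lie in short cycles.
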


We use the following result of Larsen and Tiep~\cite{larsen2023squares} for conjugacy classes that split:
\begin{thm}[\cite{larsen2023squares}, Theorem~3]\label{thm:Sn_VS_An}
For all sufficiently large $n$, if $x,y \in \mathsf{A}_n$ are elements which are conjugate in $\mathsf{S}_n$ but $x^{\mathsf{A}_n} \neq x^{\mathsf{S}_n}$, then $x^{\mathsf{A}_n} y^{\mathsf{A}_n} \supseteq \mathsf{A}_n \setminus \{1\}$.
\end{thm}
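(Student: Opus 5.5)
The plan is to run the Frobenius formula \eqref{eq:frobenius} in $A_n$ and compare it with the corresponding sum in $S_n$. The first step is to record what splitting means combinatorially. $x^{A_n}\neq x^{S_n}$ holds exactly when the cycle type of $x$ consists of distinct odd parts $l_1>\dots>l_k$. Consequently:
\begin{itemize}
\item $k\le\sqrt{n}$;
\item $x$ has at most one fixed point;
\item $f_x(i)\le 1$ for every $i$.
\end{itemize}
Applying Lemma~\ref{thm:Larsen-shalev-small-cycles} with $m$ large gives $E(x)\le 1/m+\epsilon$, which is as small as we like. The same holds for $y$, since it has the same cycle type.

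For $\tau\in A_n\setminus\{1\}$, the number of solutions of $\tau=ab$ with $a\in x^{A_n}$ and $b\in y^{A_n}$ is
\[
\frac{|x^{A_n}||y^{A_n}|}{|A_n|}\sum_{\chi\in\Irr(A_n)}\frac{\chi(x)\chi(y)\overline{\chi(\tau)}}{\chi(1)} .
\]
We sort the characters of $A_n$ into two kinds.
\begin{itemize}
\item Restrictions of $\chi_\lambda$ with $\lambda\neq\lambda'$, which are irreducible and agree with the $S_n$-character values.
\item Pairs $\chi_\lambda^{\pm}$ with $\lambda=\lambda'$. For these, $\chi_\lambda^\pm(g)=\chi_\lambda(g)/2$, except when the cycle type of $g$ equals the diagonal hook lengths of $\lambda$. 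In that case the value is $\tfrac12\bigl(\chi_\lambda(g)\pm\sqrt{\pm\prod_i l_i}\bigr)$.
\end{itemize}
Grouping each pair $\chi_\lambda^\pm$ together, the sum equals the corresponding $S_n$-type sum over $\Irr(S_n)$ modulo the action of the sign character, plus one \emph{exceptional} term. That term comes from the single self-conjugate $\lambda_0$ whose diagonal hooks are $(l_1,\dots,l_k)$.

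The second step is to show that the non-exceptional part is $1+o(1)$. This is the content of the argument behind Theorem~\ref{thm:Larsen-Shlaev}. Since $E(x)$ is tiny, Larsen--Shalev give $|\chi(x)|,|\chi(y)|\le\chi(1)^{E(x)+o(1)}$, so $|\chi(x)\chi(y)|/\chi(1)\le\chi(1)^{-1+o(1)}$. When $\tau$ has few fixed points or short cycles, Theorem~\ref{lem:larsen-shalev-1} together with $\sum_{\chi\neq 1}\chi(1)^{-s}=o(1)$ for $s>0$ bounded away from $0$ finishes the estimate. When $\tau$ has many small cycles, I would perform the cycle cancellation of Larsen and Shalev. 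Choose conjugates of $x$ and $y$ that preserve a set $\Omega$ carrying the problematic cycles of $\tau$ and whose product on $\Omega$ equals $\tau|_\Omega$. Then pass to $A_{n-|\Omega|}$, where the remaining classes still have distinct odd parts and can be arranged to remain split, so the same decomposition applies inductively.

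The third step, which I expect to be the main obstacle, is the exceptional term
\[
\frac{\chi_{\lambda_0}^\pm(x)\chi_{\lambda_0}^\pm(y)\overline{\chi_{\lambda_0}^\pm(\tau)}}{\chi_{\lambda_0}^{\pm}(1)} .
\]
Its numerator contains a factor of size up to $\prod l_i\le (n/k)^k=e^{O(\sqrt n\log n)}$. To control it, I would show that $\chi_{\lambda_0}(1)=n!/\prod\mathrm{hooks}$ is at least $e^{cn\log n}$. This should follow because $\lambda_0$ has Durfee square of size $k\le\sqrt n$ and all of its hooks are at most the $l_i$. Then it suffices to bound $|\chi^\pm_{\lambda_0}(\tau)|\le\chi_{\lambda_0}(1)^{1-c'}$, using the Larsen--Shalev bound $|\chi(\tau)|\le\chi(1)^{E(\tau)+o(1)}$, or the trivial bound after cancellation when $E(\tau)$ is close to $1$. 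The delicate part is to make the cancellation compatible with the exceptional character: after removing cycles, the new exceptional partition must still have large dimension. One also has to handle the case $\tau$ conjugate to $x$, where $\chi^\pm_{\lambda_0}(\tau)$ itself has the extra $\sqrt{\prod l_i}$ summand, but this remains negligible against $\chi_{\lambda_0}(1)$. Together these show the full sum is $1+o(1)>0$, so $\tau\in x^{A_n}y^{A_n}$.
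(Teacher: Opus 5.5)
The paper does not prove this statement. It is imported from Larsen--Tiep \cite{larsen2023squares} and used only through Remark~\ref{rem:Sn_VS_An}, so your outline has to be judged on its own.

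\textbf{The regime you do handle.} For $\tau$ with $n^{o(1)}$ fixed points the outline is essentially sound, with one correction. The claim $\chi_{\lambda_0}(1)\ge e^{cn\log n}$ is false: for $x$ an $n$-cycle, $\lambda_0=((n+1)/2,1^{(n-1)/2})$ and $\chi_{\lambda_0}(1)=\binom{n-1}{(n-1)/2}<2^n$. A weaker bound suffices. Since $\lambda_0=\lambda_0'$ forces $(\lambda_0)_1\le (n+1)/2$, Lemma~\ref{lem:ellis} gives $\chi_{\lambda_0}(1)\ge e^{cn}$. This beats $\prod l_i\le e^{O(\sqrt n\log n)}$ once $|\chi_{\lambda_0}(\tau)|\le\chi_{\lambda_0}(1)^{1/2+o(1)}$.

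\textbf{The gap: $\tau$ with many fixed points.} You propose to remove such fixed points by cycle cancellation. But a conjugate of $x$ can only preserve a union of cycles of $x$, and a split class may consist of a single cycle.

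Take $n$ odd, $x$ and $y$ $n$-cycles, and $\tau$ a $3$-cycle. No proper nonempty $\Omega$ exists, so you must face the full sum.
\begin{itemize}
\item Only hooks $\lambda$ have $\chi_\lambda(x)\neq 0$, and for them $\chi_\lambda(x)\chi_\lambda(y)=1$.
\item Write $r_\lambda=\chi_\lambda(\tau)/\chi_\lambda(1)=\bigl(\sum_{b\in\lambda}c(b)^2-\binom n2\bigr)/\bigl(2\binom n3\bigr)$, where $c(b)$ is the content. Then $\sum_{\lambda\ \mathrm{hook}}r_\lambda=n/2$ and $r_{\lambda_0}=\frac{n-5}{4(n-2)}$.
\item Hence the normalized Frobenius sum over $\Irr(A_n)$ equals
\[
\frac n4\pm\frac n2\cdot\frac{n-5}{4(n-2)}\approx\frac n4\pm\frac n8 ,
\]
with the sign determined by $\epsilon_{\lambda_0}$ and by whether $y\in x^{A_n}$.
\end{itemize}
So the non-exceptional part is not $1+o(1)$. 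The exceptional term is a constant fraction of it rather than $o(1)$. The bound $|\chi_{\lambda_0}(\tau)|\le\chi_{\lambda_0}(1)^{1-c'}$ fails, since $r_{\lambda_0}\to 1/4$. Positivity here comes only from a precise comparison of two quantities of order $n$.

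Cancellation does not rescue the general case either. It requires $|\Omega|$ to be a subset sum of the distinct odd cycle lengths of $x$. Also, because the factors on $\Omega$ are again split classes, it merely transfers the fixed points of $\tau$ to a smaller instance of the same problem.

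\textbf{What is missing.} You need a separate ingredient for $\tau$ of small support. It must work even when $x$ has boundedly many cycles, and it must track which of the two $A_n$-classes each factor lies in. Possible forms are a direct combinatorial construction, or a sharp comparison of the $\lambda_0$-contribution against the full $S_n$-sum. Nothing in your proposal supplies this.
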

\begin{rem}\label{rem:Sn_VS_An}
    Theorem~\ref{thm:Sn_VS_An} immediately implies Theorem~\ref{thm:main} for conjugacy classes $C = \sigma^{A_n}$ such that $\sigma^{A_n} \ne \sigma^{S_n}$. Thus, we only need to consider the case $C = \sigma^{A_n} = \sigma^{S_n}$ (for which Theorem~\ref{thm:main} is equivalent to $C^2 = A_n$), so in the sequel we use $\sigma^{A_n},\sigma^{S_n}$ interchangeably.
\end{rem}

We also use a classical result of Liebeck and Shalev~\cite{liebeck2004fuchsian} on the so-called Witten zeta function.

\begin{defn}
    The Witten zeta function for a finite group $G$ is defined by 
    \[
    \zeta(s) = \zeta_G(s) = \sum_{\chi \in \hat{G}} \chi(1)^{-s}.
    \]    
\end{defn}

\begin{thm}[\cite{liebeck2004fuchsian}, Theorem~1.1 and Corollary~2.7] \label{thm:witten zeta function}
For any $\epsilon,s>0$ there exists $n_0$ such that for any $n>n_0,$ we have 
\[2-\epsilon \le \sum_{\chi \in \widehat{S_n}}\chi(1)^{-s} \leq 2+\epsilon, \qquad \mbox{and} \qquad 1-\epsilon \le \sum_{\chi \in \widehat{A_n}}\chi(1)^{-s} \leq 1+\epsilon.
\]
\end{thm}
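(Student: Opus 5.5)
The plan is to separate the linear characters from the rest. For $S_n$ ($n\ge 2$) the trivial and sign characters each contribute exactly $1$. For $A_n$ ($n\ge 5$) only the trivial character is linear. The whole statement therefore reduces to
\[
\sum_{\chi(1)>1}\chi(1)^{-s}\longrightarrow 0 \qquad (n\to\infty).
\]
For $A_n$ this follows from the $S_n$ case by Clifford theory. Each $\chi\in\widehat{A_n}$ is either the restriction of some $\chi^\lambda$ with $\lambda\neq\lambda'$, where the pair $\{\lambda,\lambda'\}$ gives one character of degree $f^\lambda$, or one of the two halves of $\chi^\lambda$ for self-conjugate $\lambda$, each of degree $f^\lambda/2$. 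The $A_n$ sum is thus at most $(1+2^{s+1})$ times the nonlinear $S_n$ sum. So it suffices to treat $S_n$. By the symmetry $\lambda\leftrightarrow\lambda'$, which preserves $f^\lambda$, I restrict to $\lambda_1\ge\lambda'_1$ at the cost of a factor $2$.

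\textbf{Step 1: a dimension bound for long first rows.} Write $\lambda_1=n-k$ with $1\le k\le n/2$. I claim $f^\lambda\ge\binom{n-k}{k}$. Take any $k$-subset $S\subseteq\{k+1,\dots,n\}$. Fill the cells of $\lambda$ below the first row (a diagram $\mu\vdash k$) with $S$ along some fixed standard filling of $\mu$, and put the complement of $S$ in the first row in increasing order. Since $\mu_1\le k$ and $\{1,\dots,k\}\cap S=\emptyset$, the first $\mu_1$ entries of row one are $1,\dots,\mu_1$. These are smaller than every element of $S$, so the column conditions hold and the filling is a standard Young tableau. Distinct $S$ give distinct tableaux, which proves the claim.

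The number of $\lambda$ with $\lambda_1=n-k$ is at most $p(k)\le e^{C\sqrt k}$. The contribution of all $\lambda$ with $n-\lambda_1=k\le n/3$ is therefore at most
\[
\sum_{k=1}^{n/3} e^{C\sqrt k}\binom{n-k}{k}^{-s}.
\]
For $k\le n/3$ we have $\binom{n-k}{k}\ge \bigl(\tfrac{n-2k}{k}\bigr)^k\ge \bigl(\tfrac{n}{3k}\bigr)^k$. I would split the sum into two ranges:
\begin{itemize}[label={}]
\item for $k\le\sqrt n$, the $k$-th term is at most $e^{C\sqrt k}(3k/n)^{sk}$, and the sum over this range is $O(n^{-s})$;
\item for $\sqrt n<k\le n/3$, the factor $\binom{n-k}{k}^{-s}$ decays like $e^{-c_s k}$ (using $\binom{n-k}{k}\ge 2^{k}$ once $n\ge 3k$), which beats $e^{C\sqrt k}$.
\end{itemize}
Hence this part is $o(1)$.

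\textbf{Step 2 (the main obstacle): the remaining $\lambda$, with $\max(\lambda_1,\lambda'_1)<2n/3$.} There are at most $p(n)=e^{O(\sqrt n)}$ such partitions. It therefore suffices to show that $f^\lambda\ge e^{n^{2/3}}$ for all of them, and any bound of the form $f^\lambda\ge e^{n^{1/2+\delta}}$ would do.

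The first thing I would try is a stronger version of the injection in Step 1. Order the rows and use a block of the smallest labels to fill a staircase-like region that is forced. Then choose freely, as in Step 1, which labels go into rows $2,3,\dots$ versus row $1$. The condition $\lambda_1<2n/3$ guarantees that at least $n/3$ cells lie outside row one, and with $\lambda'_1<2n/3$ also outside column one. This should leave on the order of $\binom{\Theta(n)}{\Theta(\sqrt n)}$ or more free choices.

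If the combinatorics gets messy, the fallback is the branching rule. We have $f^\lambda\ge f^\nu$ whenever $\nu\subseteq\lambda$, and I would pick $\nu\vdash m$ with $m$ about $n/2$ and $\nu_1=m-\lfloor m/3\rfloor$. Then Step 1 applied to $\nu$ gives $f^\nu\ge\binom{2m/3}{m/3}=e^{\Theta(n)}$. The point to check is that such a $\nu\subseteq\lambda$ exists. This requires $\lambda_1\ge 2m/3$ and at least $m/3$ cells outside the first row. If $\lambda_1$ is small one instead uses a hook-shaped or two-row $\nu$ inside the first few rows and columns, still with exponential degree, possibly after conjugating. This case analysis is where I expect the real work to lie.

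Combining the $e^{O(\sqrt n)}$ count with an $e^{\Omega(n^{2/3})}$ lower bound on $f^\lambda$ makes the Step 2 contribution $o(1)$. Together with Step 1 this gives a nonlinear sum of $o(1)$, and hence both limits, $2$ for $S_n$ and $1$ for $A_n$.
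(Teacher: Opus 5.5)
The paper gives no proof of this statement; it is quoted from Liebeck--Shalev, so your argument has to stand on its own. Three parts of it are correct: the reduction to the nonlinear characters, the Clifford-theory transfer from $S_n$ to $A_n$, and Step~1. Your branching fallback also handles $n/3\le\lambda_1<2n/3$. There, take $\nu\subseteq\lambda$ with first row $\lambda_1$ and $\lfloor n/6\rfloor$ further cells from the lower part of $\lambda$; this gives $f^\lambda\ge f^\nu\ge\binom{\lambda_1}{\lfloor n/6\rfloor}=e^{\Omega(n)}$.

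The genuine gap is the remaining case $\lambda_1'\le\lambda_1<n/3$, which is exactly where you defer the ``real work''. The tools you propose there do not suffice. Take the square $\lambda=(q^q)$ with $n=q^2$:
\begin{itemize}
\item Every hook or two-row (or, after conjugating, two-column) subdiagram has at most $2q$ cells, so $f^\nu\le 2^{2q}=4^{\sqrt n}$.
\item Step~1 does not apply to $\lambda$ itself, and applied to any $\nu\subseteq\lambda$ it gives at most $\binom{\nu_1}{|\nu|-\nu_1}\le 2^{q}$.
\end{itemize}
So for such shapes your toolkit yields at best $f^\lambda\ge 4^{\sqrt n}$. But your count uses $p(n)=e^{(\pi\sqrt{2/3}+o(1))\sqrt n}$, and $p(n)\,4^{-s\sqrt n}\to\infty$ already for $s\le 1$, while the theorem is asserted for every $s>0$. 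The ``staircase'' injection might be pushed to give $\binom{\Theta(n)}{\Theta(\sqrt n)}$ choices for general shapes, but as written it is not an argument.

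The missing ingredient is a lower bound exponential in $n$ for balanced shapes, and the hook length formula gives it directly. We have
\[
\sum_{x\in\lambda}h(x)=n+\sum_i\binom{\lambda_i}{2}+\sum_j\binom{\lambda'_j}{2}\le \frac{n(\lambda_1+\lambda'_1)}{2},
\]
so AM--GM gives $\prod_x h(x)\le\bigl((\lambda_1+\lambda'_1)/2\bigr)^n$. Hence
\[
f^\lambda=\frac{n!}{\prod_x h(x)}\ge\Bigl(\frac{n}{e}\Bigr)^n\Bigl(\frac{2}{\lambda_1+\lambda'_1}\Bigr)^n\ge\Bigl(\frac{3}{e}\Bigr)^n
\qquad\text{whenever }\lambda_1,\lambda'_1\le n/3 .
\]
Combined with your branching bound for $n/3\le\lambda_1<2n/3$, this gives $f^\lambda=e^{\Omega(n)}$ throughout Step~2. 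That beats the $e^{O(\sqrt n)}$ partition count for every fixed $s>0$ and completes your proof.
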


\subsection{Combinatorial cancellation}

For the combinatorial cancellation, we use several results from the recent work of Dona~\cite{dona2025}, from a work of Bertram~\cite{bertram1972even} and from our previous paper~\cite{KLS23}.

The following theorem of Dona~\cite{dona2025} allows covering all $\tau$'s whose conjugacy class is sufficiently large. We note that this case can be covered by our hypercontractivity techniques as well, but this requires some technical work which is saved by applying Dona's strong result.

\begin{prop}[\cite{dona2025}, Theorem~1.2]\label{prop:dona}
    There exists $\epsilon>0$ such that the following holds. Let $\sigma^{S_n}, \tau^{S_n}$ be conjugacy classes with size at least $(n!)^{1-\epsilon}$, and $\tau \in A_n$. Then $\tau^{S_n}\subseteq (\sigma^{S_n})^2$.
\end{prop}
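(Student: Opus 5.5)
Since $|\sigma^{S_n}| = n!/z_\sigma$ with $z_\sigma=\prod_i i^{f_\sigma(i)} f_\sigma(i)!$, the hypothesis $|\sigma^{S_n}|\ge (n!)^{1-\epsilon}$ gives $\log z_\sigma \le \epsilon n\log n$. Hence for every fixed $m$, the number of points of $\sigma$ lying in cycles of length $<m$ is at most $O_m(\epsilon n)$, and the total number of cycles of $\sigma$ is at most $c(\epsilon) n$ with $c(\epsilon)\to 0$. The same holds for $\tau$. The plan is to combine a cycle-cancellation step with the character machinery above. First we remove the short cycles of both $\sigma$ and $\tau$ on a small set $\Omega$. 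What remains is a pair of permutations of $[n]\setminus\Omega$ with only $n^{o(1)}$ short cycles of each length $<m$. For such permutations Lemma~\ref{thm:Larsen-shalev-small-cycles} gives $E\le 1/m+\epsilon'<1/4-\epsilon'$, and Theorem~\ref{thm:Larsen-Shlaev}, or directly the Frobenius formula \eqref{eq:frobenius} with Theorem~\ref{lem:larsen-shalev-1} and Theorem~\ref{thm:witten zeta function}, finishes the argument.

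\textbf{The cancellation.} We look for a set $\Omega\subset[n]$ of size $O(\epsilon' n)$ that is a union of cycles of $\tau$ and is also the support of a union of cycles of some conjugate $\sigma'$ of $\sigma$. Together with $\Omega$ we need permutations $\sigma_1,\sigma_2$ of $\Omega$ of the cycle type of $\sigma|_{\Omega}$ with $\sigma_1\sigma_2=\tau|_\Omega$. We build $\Omega$ greedily. Put into $\Omega$ all cycles of $\tau$ of length $<m$ and all cycles of $\sigma$ of length $<m$ (the latter placed on fresh points). Then add one long cycle of $\sigma$, of length $L$, and one long cycle of $\tau$ (or a few of them), so that the total sizes of the $\sigma$-part and the $\tau$-part agree on $\Omega$. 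On $\Omega$ the long $\sigma$-cycle then takes up more than $3|\Omega|/4$ of the points. In this situation Bertram-type results (products of two permutations dominated by one long cycle cover all even permutations of $\Omega$, as in \cite{bertram1972even} and the cancellation lemmas of \cite{KLS23}) yield $\sigma_1,\sigma_2$ with $\sigma_1\sigma_2=\tau|_\Omega$. We fix parities by adjusting one cycle so that both $\sigma|_\Omega$ and $\tau|_\Omega$ are even. This does no harm, since then $\sigma|_{[n]\setminus\Omega}$ and $\tau|_{[n]\setminus\Omega}$ are even as well, and the remaining classes do not split because $\sigma^{S_n}=\sigma^{A_n}$ is only needed in $S$.

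\textbf{Finishing.} After cancellation, the restrictions of $\sigma$ and $\tau$ to $[n]\setminus\Omega$, of size $n'=(1-O(\epsilon'))n$, have only long cycles besides at most $n^{o(1)}$ short ones. We apply Lemma~\ref{thm:Larsen-shalev-small-cycles} with $m=5$ to the restricted $\sigma$ to get $E\le 1/4-\epsilon''$. Theorem~\ref{thm:Larsen-Shlaev} in $S_{n'}$ then gives that $\tau|_{[n]\setminus\Omega}$ is a product of two conjugates of $\sigma|_{[n]\setminus\Omega}$. Gluing this with $\sigma_1,\sigma_2$ on $\Omega$ expresses $\tau$ as a product of two elements of $\sigma^{S_n}$. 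Since $\tau$ was arbitrary in its class, this gives $\tau^{S_n}\subseteq(\sigma^{S_n})^2$.

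\textbf{Main obstacle.} The hard step is the matching in the cancellation. $\Omega$ must simultaneously be a union of $\tau$-cycles and a union of $\sigma$-cycles. This is a subset-sum compatibility problem, and it becomes delicate when the long cycles of $\sigma$ and of $\tau$ have very different lengths, for instance when $\sigma$ has one cycle of length about $n/2$ and $\tau$ only has cycles of length about $n/3$. I would try to handle this by allowing $\Omega$ to absorb one whole long cycle of $\tau$ together with a long $\sigma$-cycle, and compensating the length discrepancy with further cycles. This is still possible because both permutations have at most $c(\epsilon)n$ cycles, so their cycle lengths are flexible enough to approximately balance. If that fails, the fallback is to skip cancellation and bound the Frobenius sum directly. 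That would use a uniform bound of the form $|\chi(\sigma)|\le \chi(1)^{1-\delta}$ for classes of size $\ge (n!)^{1-\epsilon}$, applied to the three classes $\sigma,\sigma,\tau$, and summed with Theorem~\ref{thm:witten zeta function}. The needed exponent, with $2(1-\delta)+(1-\delta)-1<1$ after accounting for the sum, would then require $\delta>1/3$, which is the real analytic difficulty in that route.
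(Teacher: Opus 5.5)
\textbf{There is a genuine gap in the cancellation step, and it cannot be patched by balancing.} Cycle cancellation in the form you use (as in Lemma~\ref{lem:cancelation}) needs a set $\Omega$ whose size is \emph{exactly} a sum of whole $\tau$-cycle lengths and also exactly a sum of whole $\sigma$-cycle lengths. Approximate agreement is useless here.

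Concretely, take $n$ odd, $\tau$ an $n$-cycle, and $\sigma$ with $\lfloor \delta n\rfloor$ fixed points ($\delta<\epsilon$) and two cycles of length about $(1-\delta)n/2$. Both classes have size at least $(n!)^{1-\epsilon}$. The only unions of $\tau$-cycles are $\emptyset$ and $[n]$, so no proper cancellation exists. The same happens for $\tau$ with three cycles of length about $n/3$.

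You are then left with the original pair, and none of your tools handles it:
\begin{enumerate}
\item $E(\sigma)\ge e_1=\log(\delta n)/\log n\to 1$, so Theorem~\ref{thm:Larsen-Shlaev} is unavailable.
\item Lemma~\ref{lem:big-cycle-sigma} needs a single cycle of length $\ge \lfloor 3n/4\rfloor$ with all other points fixed, but here there are two cycles of length about $n/2$.
\item The Frobenius sum \eqref{eq:frobenius} cannot be bounded termwise. The standard character alone contributes $-(\lfloor\delta n\rfloor-1)^2/(n-1)$, of size about $\delta^2 n\to\infty$.
\end{enumerate}

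\textbf{The fallback also fails.} A uniform bound $|\chi(\sigma)|\le \chi(1)^{1-\delta}$ is false for classes of size $\ge (n!)^{1-\epsilon}$ with linearly many fixed points, because $\chi(\sigma)=\fix(\sigma)-1\approx \delta n$ when $\chi(1)=n-1$. Your exponent count is also off: you would need $3(1-\delta)-1<0$, i.e.\ $\delta>2/3$, not $\delta>1/3$.

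\textbf{A secondary problem.} Even when a matching $\Omega$ exists, Lemma~\ref{lem:big-cycle-sigma} only covers an $\ell$-cycle padded by fixed points. It does not cover a long cycle together with the short nontrivial cycles of $\sigma$ that you put into $\Omega$, and there can be linearly many of these (e.g.\ $\delta n$ transpositions). The paper handles short nontrivial cycles separately (Lemma~\ref{lem:2-cases} and the bounded set $M$) and pads Bertram's lemma only with fixed points. Also note that removing $\tau$'s short cycles is unnecessary in your plan, since Theorem~\ref{thm:Larsen-Shlaev} covers all of $A_{n'}$ once $E$ is small.

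\textbf{How the paper treats this statement.} The paper does not prove this proposition. It imports it as Theorem~1.2 of \cite{dona2025}, with the passage from $A_n$-classes to $S_n$-classes coming from Reduction~I in Section~5 there. Its introduction singles out exactly this incompatibility (many fixed points and a few long cycles in $\sigma$ against $\tau$ whose cycle lengths do not fit) as the point where naive cycle cancellation breaks down. So what your proposal is missing is essentially the entire content of Dona's theorem: a way to handle pairs where $\sigma$ has linearly many fixed points and several long cycles, while $\tau$ offers no compatible sub-collection of cycles.
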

\begin{cor}\label{cor:dona}
    There exist $\epsilon_0>0$ such that the following holds. Let $\sigma^{S_n}, \tau^{S_n}$ be conjugacy classes with at most $\epsilon_0\cdot n$ cycles, and $\tau \in A_n$. Than $\tau^{S_n}\subseteq (\sigma^{S_n})^2$.
\end{cor}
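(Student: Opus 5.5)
The plan is to deduce Corollary~\ref{cor:dona} from Proposition~\ref{prop:dona} by showing that a permutation with few cycles has a large conjugacy class. So the key step is a quantitative lower bound: if $\pi\in S_n$ has at most $\epsilon_0 n$ cycles, then $|\pi^{S_n}|\ge (n!)^{1-\epsilon}$, where $\epsilon$ is the constant from Proposition~\ref{prop:dona} and $\epsilon_0$ is small enough in terms of $\epsilon$. Applying this to both $\sigma$ and $\tau$ puts us exactly in the hypotheses of Proposition~\ref{prop:dona}, which then gives $\tau^{S_n}\subseteq(\sigma^{S_n})^2$.

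To prove the size bound, use $|\pi^{S_n}| = n!/|C_{S_n}(\pi)|$, so it suffices to show $|C_{S_n}(\pi)|\le (n!)^{\epsilon}$. If $\pi$ has $f(i)$ cycles of length $i$, then
\[
|C_{S_n}(\pi)|=\prod_i i^{f(i)} f(i)!.
\]
Let $k=\sum_i f(i)\le \epsilon_0 n$ be the number of cycles. Bound the two factors separately.

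\emph{The factorial factor.} We have $\prod_i f(i)!\le k!\le k^k\le (\epsilon_0 n)^{\epsilon_0 n}$.

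\emph{The power factor.} The product $\prod_i i^{f(i)}$ is a product of $k$ cycle lengths whose sum is $n$. By AM--GM it is at most $(n/k)^k$. The function $k\mapsto (n/k)^k$ is increasing for $k\le n/e$, so for $\epsilon_0<1/e$ this is at most $(1/\epsilon_0)^{\epsilon_0 n}\le e^{n}$.

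\emph{Combining.} Altogether $|C_{S_n}(\pi)|\le e^{n}n^{\epsilon_0 n}$. Since $\log n! = n\log n - O(n)$, for large $n$ this is at most $(n!)^{2\epsilon_0}$. Choosing $\epsilon_0=\epsilon/2$ gives $|C_{S_n}(\pi)|\le (n!)^{\epsilon}$ for all sufficiently large $n$.

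The only minor obstacle is that this holds only for large $n$, whereas the statement is for all $n$. I would handle this either by reading Proposition~\ref{prop:dona} and the corollary as asymptotic statements, consistent with the paper's convention of large $n$ (Theorem~\ref{thm:main} is only claimed for large $n$), or, if the claim must hold for every $n$, by shrinking $\epsilon_0$ so that $\epsilon_0 n<1$ for every $n$ below the threshold. In the latter case no permutation of such small $n$ satisfies the hypothesis, and the statement is vacuous there. Everything else is routine.
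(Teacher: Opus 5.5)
Your proposal is correct and follows the paper's route, which the paper leaves implicit: few cycles force a small centralizer, hence a conjugacy class of size at least $(n!)^{1-\epsilon}$, and Proposition~\ref{prop:dona} then applies directly to $\sigma$ and $\tau$. Your explicit centralizer estimate fills in details the paper omits, as does your remark that the hypothesis is vacuous for small $n$ once $\epsilon_0 n<1$ (this works because shrinking $\epsilon_0$ only strengthens the hypothesis, so the large-$n$ threshold does not grow).
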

Theorem~1.2 in~\cite{dona2025} is stated for conjugacy classes of $A_n$. The stronger statement allowing $\sigma \in S_n$ follows from Reduction I in Section~5 which is proven as an intermediate theorem in the paper. 

We also use the aforementioned well-known result of Bertram~\cite{bertram1972even}.
\begin{lem}[\cite{bertram1972even}, Corollary~2.1] \label{lem:big-cycle-sigma}
    Let $n > 4$ and $\lfloor \frac{3n}{4} \rfloor \le \ell \le n$, and let  $\sigma \in S_n$  be an $\ell$ cycle (with the other $n-\ell$ elements being fixed points). Then ${(\sigma^{S_n})}
    ^2 = A_n$.
\end{lem}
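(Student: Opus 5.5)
The plan is a direct combinatorial construction: for each $\tau\in A_n$, exhibit two $\ell$-cycles $\alpha,\beta\in\sigma^{S_n}$ with $\alpha\beta=\tau$. Since $\sigma^{S_n}$ consists of all $\ell$-cycles, the statement is equivalent to the claim that every even permutation of $[n]$ is a product of two $\ell$-cycles whenever $\lfloor 3n/4\rfloor\le \ell\le n$. Parity is automatic, since the product of two $\ell$-cycles is even.

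\textbf{Step 1: a gluing lemma.} Suppose $\Omega_1,\Omega_2\subseteq[n]$ are disjoint and $\tau|_{\Omega_i}=\alpha_i\beta_i$, where $\alpha_i,\beta_i$ are $\ell_i$-cycles supported in $\Omega_i$. I will show that $\tau|_{\Omega_1\cup\Omega_2}$ is a product of two $(\ell_1+\ell_2)$-cycles. The idea is to take points $a_i\in\operatorname{supp}\alpha_i$ and set $b_i=\alpha_i^{-1}(a_i)$, and then replace $(\alpha_1\alpha_2,\beta_1\beta_2)$ by $(\alpha_1\alpha_2\,t,\;t\,\beta_1\beta_2)$ for a suitable transposition $t=(x_1\,x_2)$ with $x_i\in\Omega_i$. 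Right multiplication by $t$ merges the two cycles of $\alpha_1\alpha_2$ into one, and left multiplication merges those of $\beta_1\beta_2$; since $t^2=1$, the product is unchanged. The points $x_i$ must lie in $\operatorname{supp}\alpha_i\cap\operatorname{supp}\beta_i$ so that both merges happen. This intersection is nonempty unless $\tau|_{\Omega_i}$ is degenerate, and I will state that as a hypothesis on the blocks.

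I also need a length-increment lemma. If $\tau=\alpha\beta$ with $\alpha,\beta$ $\ell$-cycles and $p$ is a fixed point of both, then conjugating the situation by inserting $p$ gives $\tau=\alpha'\beta'$ with $\ell+1$-cycles: take $\alpha'=\alpha\,(x\,p)$ and $\beta'=(x\,p)\,\beta$ for $x$ in both supports. Hence the set of achievable lengths is upward closed up to the number of points available. Together these reduce the problem to bounding the \emph{minimal} achievable length $L(\tau)$ by $\lfloor 3n/4\rfloor$.

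\textbf{Step 2: building blocks.} I decompose the non-fixed cycles of $\tau$ into blocks, each realised as a product of two cycles of small length:
\begin{itemize}
\item an odd $k$-cycle equals the square of a $k$-cycle, hence is a product of two $k$-cycles;
\item a pair consisting of a $k$-cycle and a $j$-cycle of equal parity is a product of two cycles of length about $(k+j)/2+1$ on its support (for example, $(a\,b)(c\,d)$ is a product of two $3$-cycles);
\item fixed points of $\tau$ are absorbed only as needed via the increment lemma.
\end{itemize}
Each block of support size $m$ costs at most $3m/4$, rounded appropriately. The worst case is a pair of transpositions, with cost $3$ on $4$ points. Gluing the blocks via Step 1 yields $L(\tau)\le \tfrac34|\operatorname{supp}\tau|+O(1)$. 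The increment lemma then reaches every $\ell\ge\lfloor 3n/4\rfloor$, using the remaining fixed points and, if needed, points of the support.

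\textbf{Main obstacle.} The delicate part is the rounding in Step 2. Bertram's threshold is sharp, so the additive $O(1)$ losses from gluing must be eliminated exactly. This requires careful case analysis on the residues mod $4$ and on the mixture of odd and even cycles in $\tau$; for instance, a block of three transpositions has cost $5$ on $6$ points, which exceeds $3/4$ and must be avoided.

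To handle this, I would first try an induction on the number of cycles of $\tau$, where each step removes the cheapest possible block while keeping the exact bound $\lfloor 3m/4\rfloor$ for the remaining part. Small cases (say $n\le 8$) would be checked directly. If the gluing lemma costs an extra point in some configurations, I would instead glue through a shared point. That means letting $\Omega_1,\Omega_2$ overlap in one point of $\tau$'s fixed set, which saves the lost unit.
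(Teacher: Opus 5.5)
\textbf{There is a genuine gap.} The paper gives no proof of this lemma; it imports it verbatim from Bertram, so your sketch has to stand on its own. As written, it fails in two concrete places.

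First, your odd-cycle block is the wrong one. Writing an odd $k$-cycle as the square of a $k$-cycle costs $k$ on $k$ points, which contradicts your claim that every block costs at most $3/4$ of its support. When an odd cycle cannot be paired, the scheme misses the threshold. For $n=7$ and $\tau=(1\,2\,3)(4\,5)(6\,7)$, your blocks give length $3+3=6$ with no fixed points to spare, while $\lfloor 21/4\rfloor=5$. The right block is $(x_1\cdots x_k)=(x_1\cdots x_h)(x_h\cdots x_k)$ with $h=(k+1)/2$ (composing right to left), e.g.\ $(1\,2\,3)=(1\,2)(2\,3)$. Use this block together with an actual construction of the length-$\bigl((k+j)/2+1\bigr)$ block for a pair of even cycles; you only exhibit $(a\,b)(c\,d)$. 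Gluing then gives length exactly $\bigl(|\operatorname{supp}\tau|+c'(\tau)\bigr)/2$, where $c'(\tau)$ counts nontrivial cycles. This is an integer at most $\frac34|\operatorname{supp}\tau|$, hence at most $\lfloor 3n/4\rfloor$. So there is no rounding loss at all. The mod-$4$ case analysis and the shared-point gluing you identify as the main obstacle are not where the difficulty lies.

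Second, and more seriously, your reduction to bounding the minimal length $L(\tau)$ is false. The increment lemma needs a point fixed by both $\alpha$ and $\beta$, and any such point is fixed by $\tau=\alpha\beta$. So points of $\operatorname{supp}\tau$ can never be used, contrary to your ``if needed, points of the support''. Starting from a factorization of length $\ell$, you only reach lengths up to $n-\bigl(|\operatorname{supp}\alpha\cup\operatorname{supp}\beta|-\ell\bigr)$. This gap is additive under gluing and is positive for your pair blocks. For $\tau=(1\,2)(3\,4)(5\,6)(7\,8)$ and $n=8$, you get $\ell=6$ with $\operatorname{supp}\alpha\cup\operatorname{supp}\beta=[8]$, and $\ell=7,8$ are out of reach. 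What is missing is a way to realize blocks at larger lengths, up to their full size, so that the glued sums cover the whole range $[\lfloor 3n/4\rfloor,n]$. For example, $(a\,b)(c\,d)=(a\,c\,b\,d)^2$ gives length $4$; you would need analogous constructions for an odd $k$-cycle at each length between $(k+1)/2$ and $k$, and for pairs of even cycles. These are positive-genus factorizations. At $\ell=n$ they already contain the statement that every even permutation of $[n]$ is a product of two $n$-cycles. They require explicit constructions that your sketch does not supply, and this is the substantive content of Bertram's result.
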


The following results are from~\cite{KLS23}. The first of them is an easy lemma.
\begin{lem}[\cite{KLS23}, Lemma~6.2] \label{lem:cancelation}
Let \(C_1,C_1',C_1''\) be conjugacy classes of \(S_m\), and let
\(C_2,C_2',C_2''\) be conjugacy classes of \(S_{n-m}\). Suppose that
\[
C_1'' \subseteq C_1 C_1'
\qquad\text{and}\qquad
C_2'' \subseteq C_2 C_2'.
\]
Then
\[
C_1'' \oplus C_2''
\subseteq
(C_1\oplus C_2)(C_1'\oplus C_2').
\]
\end{lem}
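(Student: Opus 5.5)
Fix $\tau_1'' \in C_1''$ and $\tau_2'' \in C_2''$; then $\tau_1''\oplus\tau_2''$ is a typical element of $C_1''\oplus C_2''$. Here $\oplus$ means the conjugacy class of $S_n$ of permutations that act on a subset of size $m$ with cycle type given by the first class and on its complement with cycle type given by the second. Since both sides are conjugation-invariant subsets of $S_n$, it suffices to treat one representative. We take the block decomposition $[n]=[m]\sqcup([n]\setminus[m])$ and identify $S_m\times S_{n-m}$ with the stabilizer of this partition.

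\textbf{Step 1: factor each block.} By hypothesis $C_1''\subseteq C_1C_1'$, so there are $\alpha_1\in C_1$ and $\beta_1\in C_1'$ with $\tau_1''=\alpha_1\beta_1$ in $S_m$. Likewise there are $\alpha_2\in C_2$ and $\beta_2\in C_2'$ with $\tau_2''=\alpha_2\beta_2$ in $S_{n-m}$.

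\textbf{Step 2: assemble.} Set $\alpha=\alpha_1\oplus\alpha_2$ and $\beta=\beta_1\oplus\beta_2$, the permutations acting blockwise. Multiplication in $S_m\times S_{n-m}$ is coordinatewise, so $\alpha\beta=(\alpha_1\beta_1)\oplus(\alpha_2\beta_2)=\tau_1''\oplus\tau_2''$. The cycle type of $\alpha$ is the union of the cycle types of $\alpha_1$ and $\alpha_2$, hence $\alpha\in C_1\oplus C_2$. Similarly $\beta\in C_1'\oplus C_2'$.

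\textbf{Step 3: extend to all elements.} An arbitrary element of $C_1''\oplus C_2''$ has the form $g(\tau_1''\oplus\tau_2'')g^{-1}$ for some $g\in S_n$. Conjugating the factorization from Step 2 gives
\[
g(\tau_1''\oplus\tau_2'')g^{-1}=(g\alpha g^{-1})(g\beta g^{-1}).
\]
The two factors on the right lie in the same $S_n$-classes as $\alpha$ and $\beta$, namely $C_1\oplus C_2$ and $C_1'\oplus C_2'$. This proves the inclusion.

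There is no real obstacle here. The only care needed is notational: $\oplus$ must be read as the $S_n$-conjugacy class obtained by concatenating cycle types, and the $S_n$-conjugation in Step 3 uses only $S_n$-conjugacy, so no $A_n$ splitting issues arise.
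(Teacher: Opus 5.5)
Your proof is correct: factoring each block separately, forming $\alpha=\alpha_1\oplus\alpha_2$ and $\beta=\beta_1\oplus\beta_2$ so that $\alpha\beta=\tau_1''\oplus\tau_2''$, and then using that both sides are unions of $S_n$-conjugacy classes is exactly the direct argument behind this lemma. The paper calls it an easy lemma and quotes it from \cite{KLS23} without giving a proof.
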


\begin{lem}[Corollary of Lemma 6.3 of~\cite{KLS23}]\label{lem:2-cases}
For all $b$ and all $a \ge 2$ we have $(b^{2a}) \subseteq (a^{2b})^2$, where $(x^{y})$ is defined as the conjugacy class that consists of $y$ cycles of length $x$. In particular if $(\sigma) = (\sigma') \oplus (a^{2b})$, $\tau = (\tau') \oplus (b^{2a})$, and $(\sigma')^{2} \supseteq \tau',$ then $(\sigma)^{2}\supseteq \tau.$  
\end{lem}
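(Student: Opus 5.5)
The plan has two parts. First I would establish the inclusion $(b^{2a}) \subseteq (a^{2b})^2$ inside $S_{2ab}$. Then I would derive the ``in particular'' statement from it formally, using Lemma~\ref{lem:cancelation}.

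\emph{The first inclusion.} I would specialize Lemma~6.3 of~\cite{KLS23} to the present parameters. As I recall it, that lemma produces, for a class with equal cycle lengths, an explicit factorization of a permutation with a prescribed regular cycle type as a product of two conjugates. I have not checked its exact form, so I would verify the specialization by hand. The parity is consistent. The class $(a^{2b})$ has an even number of $a$-cycles, so it lies in $A_{2ab}$ or is conjugation-closed in $S_{2ab}$ in the needed sense. The class $(b^{2a})$ has sign $(-1)^{(b-1)2a}=1$, so it lies in $A_{2ab}$.

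If I had to construct the factorization directly, I would work on the grid $\mathbb{Z}_a\times\mathbb{Z}_b\times\mathbb{Z}_2$. I would take $\sigma_1(i,j,k)=(i+1,j,k)$, which has $2b$ cycles of length $a$, and $\tau(i,j,k)=(i,j+(-1)^k,k)$, which has $2a$ cycles of length $b$. I would then try to twist $\sigma_1$ between the two layers $k=0,1$ so that $\sigma_2=\sigma_1^{-1}\tau$ also has all cycles of length $a$. The naive commuting choice gives $\sigma_2$ cycles of length $\mathrm{lcm}(a,b)$, which is wrong unless $b\mid a$. So the real content is to choose the twist that removes the $\mathrm{lcm}$ effect, and this is exactly what I expect Lemma~6.3 of~\cite{KLS23} to supply. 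This is the step I expect to be the main obstacle. I would first try to read the construction off directly from that lemma rather than rebuild it.

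\emph{The ``in particular'' part.} Write $\sigma=\sigma'\oplus\sigma''$ and $\tau=\tau'\oplus\tau''$. Here $\sigma'',\tau''$ act on a common block of $m=2ab$ points, with types $(a^{2b})$ and $(b^{2a})$ respectively, and $\sigma',\tau'$ act on the complementary $n-m$ points. I would apply Lemma~\ref{lem:cancelation} with $C_1=C_1'=(a^{2b})$ and $C_1''=(b^{2a})$ on the first block, and with $C_2=C_2'=(\sigma')$ and $C_2''=\tau'^{S_{n-m}}$ on the second. The hypothesis $C_1''\subseteq C_1C_1'$ is the first part. The hypothesis $C_2''\subseteq C_2C_2'$ is the assumption $(\sigma')^2\supseteq\tau'$, which, since both sides are conjugation-invariant, gives the whole class of $\tau'$. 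Lemma~\ref{lem:cancelation} then yields $\tau\in(\sigma)^2$, which is the claim.
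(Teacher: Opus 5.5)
Your proposal follows the paper's route. The paper gives no argument for $(b^{2a})\subseteq(a^{2b})^2$ beyond citing Lemma~6.3 of~\cite{KLS23}, so resting that step on the citation is exactly what the paper does, and your unfinished grid construction is not needed. Your ``in particular'' step, applying Lemma~\ref{lem:cancelation} with $C_1=C_1'=(a^{2b})$, $C_1''=(b^{2a})$ on a block of $2ab$ points and $C_2=C_2'=(\sigma')$, $C_2''=(\tau')^{S_{n-2ab}}$ on the complement (using that $(\sigma')^2$ is conjugation-invariant), is correct and is the intended derivation.
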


By the lemma, when trying to show $\tau^{A_n}\subseteq (\sigma^{S_n})^2$, we may assume that either $\tau$ has less than $2A$ cycles of length $\ell$ for all $\ell \le B$ or $\sigma$ has less than $2B$ cycles of length $\ell$ for all $1 < \ell \le A$. Indeed, the rest of the cases are covered by the lemma.

\medskip
We use the ``umvirate'' language which is standard in discrete Fourier analysis (see, e.g.,~\cite{KLS23}).

\begin{defn}
    A \emph{dictator} is a set of the form
\[
D_{i\to j}:=\{\pi\in S_n:\ \pi(i)=j\}.
\]
An intersection of (distinct) dictators is called a $t$-\emph{umvirate} if it is nonempty.
Equivalently, a nonempty $t$-umvirate is specified by two $t$-tuples
$I=(i_1,\dots,i_t)$ and $J=(j_1,\dots,j_t)$ with all $(i_1,\dots,i_t)$ and $(j_1,\dots,j_t)$ distinct (but $i_k$ may be equal to $j_\ell$), and is defined as
\[
U_{I\to J}:=\{\pi:\ \pi(i_r)=j_r\ \ \forall r\in[t]\}.
\]
\end{defn}

\begin{defn}[$d$-restriction]
A $d$-restriction of a function is its restriction to a $d$-umvirate $U_{I \to J}$ where $|I|=d$.
    
\end{defn}
\begin{defn}[$k$-chain and $k$-cycle]
 Consider the $k$-umvirate $U_{I \to J}$ where $I=(i_{1}, \dots, i_{k})$ and $J=(i_{2}, \dots, i_{k+1})$ (corresponding to $i_{1} \to i_{2} \to \dots \to i_{k+1}$). We call this restriction a $k$-chain if all coordinates $i_{1}, \dots i_{k+1}$ are distinct. Otherwise, if $i_1 = i_{k+1}$ and all other coordinates are distinct, then we call it a $k$-cycle.
\end{defn}
We use the following lemma.
\begin{lem}[The $2m$-gadget, ~\cite{KLS23}, step 2 of the proof of Lemma 6.1]\label{lem:small-cycle-removal}
Let $m \ge 2$. There exist $2m$-tuples
$S,T,W$ such that:
\begin{enumerate}
\item the restriction pattern $T \to W$ is a disjoint union of $2$ cycles of length $m$;
\item each of the restriction patterns $S \to T$ and $S \to W$ is a disjoint union
of 
\[
\lfloor\frac  m2\rfloor  \text{ many $1$-cycles},\qquad
2m-3\lfloor\frac  m2\rfloor\text{ many $1$-chains},\qquad
\lfloor \frac m2 \rfloor\text{ many $2$-chains}.
\]
\end{enumerate}
\end{lem}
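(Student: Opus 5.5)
The plan is to give an explicit construction; the lemma is purely combinatorial. I read the restriction pattern $X\to Y$ of two $2m$-tuples as the partial injection $x_r\mapsto y_r$, drawn as a directed graph on the set of entries. Its components are then cycles and chains. Note that the prescribed counts are consistent: $\lfloor m/2\rfloor+(2m-3\lfloor m/2\rfloor)+2\lfloor m/2\rfloor=2m$ edges. I take $2m$ distinct points $a_1,\dots,a_m,b_1,\dots,b_m$, with indices read mod $m$, together with fresh auxiliary points. I set
\[
T=(a_1,\dots,a_m,b_1,\dots,b_m),\qquad W=(a_2,\dots,a_m,a_1,b_2,\dots,b_m,b_1).
\]
Then $T\to W$ consists of the two $m$-cycles $a_1\to\cdots\to a_m\to a_1$ and $b_1\to\cdots\to b_m\to b_1$, which gives (1).

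The key idea for (2) is to treat the two cycles asymmetrically, so that what $S\to T$ sees on the $a$-cycle, $S\to W$ sees on the $b$-cycle. Let $k=\lfloor m/2\rfloor$ and let $O=\{1,3,\dots,2k-1\}$. In the $a$-block, at the position of $a_i$, I put $s=a_i$ for $i\in O$ and a fresh point otherwise. In the $b$-block, at the position of $b_i$, I put $s=b_{i+1}$ for $i\in O$ and a fresh point otherwise. All entries of $S$ are distinct, since the points $a_i$ ($i$ odd), $b_{i+1}$ (that is, $b_{\text{even}}$) and the fresh points are pairwise different.

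Next I verify (2) for $S\to T$.
\begin{itemize}
\item The $a$-positions with $i\in O$ give $k$ fixed points ($1$-cycles).
\item Every other $a$-position gives a fresh point mapped to some $a_i$ with $a_i\notin S$, hence a $1$-chain.
\item The $b$-positions with $i\in O$ give $b_{i+1}\to b_i$ with $b_i$ odd. Such $b_i$ is not in $S$ and is not the image of another entry, so these are $1$-chains, except where they get absorbed into a longer chain as in the next item.
\item At a position $b_{i}$ with $i\in\{2,4,\dots,2k\}$, a fresh point maps to $b_{i}$, and $b_i\in S$ maps to $b_{i-1}$. This gives the $2$-chain $x\to b_i\to b_{i-1}$, and there are exactly $k$ of them.
\end{itemize}
So $S\to T$ has $k$ $1$-cycles and $k$ $2$-chains, and the remaining $2m-3k$ edges are $1$-chains.

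The check for $S\to W$ is symmetric.
\begin{itemize}
\item The $b$-positions with $i\in O$ now give $b_{i+1}\to b_{i+1}$, which are $k$ fixed points.
\item The other $b$-positions give fresh points mapped to $b_{i+1}$ where $b_{i+1}\notin S$, hence $1$-chains.
\item On the $a$-side, $a_i\to a_{i+1}$ for $i\in O$, and fresh points map to $a_{i+1}$ for the remaining $i$. Whenever $a_{i+1}\in S$, that is, $i+1\in O$ (indices mod $m$, so $i=m$ gives $a_1$ when $m$ is even or odd), we obtain a $2$-chain $x\to a_{i+1}\to a_{i+2}$, and there are exactly $k$ such.
\item All other edges are $1$-chains.
\end{itemize}

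The only step needing care, and the one I expect to be the main obstacle, is the wrap-around and parity bookkeeping when $m$ is odd. In that case $a_m$ (respectively $b_m$) is not matched with a fixed point. I must confirm that the leftover position produces a $1$-chain rather than accidentally extending a $2$-chain into a $3$-chain or closing a cycle. This amounts to checking that $a_m\notin S$, that $b_1\notin S$, and that no fresh point is hit twice. I would verify this directly for the two parities of $m$, and check the case $m=2,3$ by hand.
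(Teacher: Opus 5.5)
Your construction is correct and complete. Write $x_i$ and $y_i$ for the fresh points at the positions of $a_i$ and $b_i$. Checking both parities of $m$ gives the following.
\begin{itemize}
\item In $S\to T$: the $k$ fixed points $a_i$ ($i\in O$), the $k$ two-chains $y_{i+1}\to b_{i+1}\to b_i$ ($i\in O$), and $2m-3k$ isolated edges.
\item In $S\to W$: the $k$ fixed points $b_{i+1}$ ($i\in O$), the $k$ two-chains $x_i\to a_{i+1}\to a_{i+2}$ ($i+1\in O$, including the wrap-around $x_m\to a_1\to a_2$), and $2m-3k$ isolated edges.
\end{itemize}
The paper gives no proof of this lemma; it imports the gadget from step~2 of the proof of Lemma~6.1 in \cite{KLS23}. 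Your explicit construction is therefore a self-contained verification of what the paper takes on citation, not a competing route.

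Two small tidy-ups.
\begin{itemize}
\item In your third bullet for $S\to T$ there is no exception to handle. Every edge $b_{i+1}\to b_i$ with $i\in O$ is absorbed into a two-chain, because the position of $b_{i+1}$ has even index at most $2k\le m$ and so carries the fresh point $y_{i+1}$.
\item The parity worry you flag resolves at once. Since $\max O=2k-1<m$ and only even-indexed $b$'s lie in $S$, we have $a_m\notin S$ and $b_1\notin S$, and fresh points are never targets. For odd $m$ the leftover isolated edges are $y_m\to b_m$ in $S\to T$, and $x_{m-1}\to a_m$ and $y_m\to b_1$ in $S\to W$.
\end{itemize}
Also, the $S\to W$ check is not literally symmetric to the $S\to T$ one: the wrap-around two-chain only occurs on the $a$-side of $S\to W$. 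Your explicit verification already covers this, so only the word ``symmetric'' should be dropped.
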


\subsection{Globalness and hypercontractivity.}

For the hypercontractivity component of our proof, we use several results of Ellis, Friedgut and Pilpel~\cite{ellis2011intersecting}, Keevash and Lifshitz~\cite{keevash2023sharp} and~\cite{KLS23}. To present these results, a few more definitions are needed.

\begin{defn}[$r$-globalness]
    A function $f$ is defined to be \emph{$r$-global} if 
    \[
        \|f_{I\to J}\|_2 \le r^{|I|}\|f\|_2
    \]
    for all $d$-restrictions $f_{I\to J}$ and for all every $d\in \mathbb{N}$. A set $A$ is called $r$-global if its indicator function is $r$-global. 
\end{defn}

\begin{defn}[Levels of representations]
    Let $\lambda = (\lambda_1, \lambda_2, \ldots, \lambda_t) \vdash n$ be a partition. The \emph{strict level} of the representation $V_{\lambda}$ of $S_n$ corresponding to $\lambda$ is defined as $n - \lambda_1$. The \emph{level} of $V_{\lambda}$ is the minimum of the strict levels of $V_{\lambda}$ and $V_{\lambda'}$, where $\lambda'$ denotes the conjugate partition of $\lambda$.
\end{defn}

\begin{defn}[Space of Matrix Coefficients]
    The \emph{space of matrix coefficients} of an irreducible representation $V$ of a finite group $G$ is the space spanned by the functions $f_{v,\varphi} \colon G \to \mathbb{C}$, indexed by $v \in V$ and $\varphi \in V^*$, which are given by
    \[
        f_{v,\varphi}(g) = \varphi(g v).
    \]
    We denote this space by $W_\chi$ where $\chi$ is the character, and $f^{= \chi}$ the projection of a function $f$ onto $W_\chi$. We sometimes refer to this space also by \emph{isotypic component}. 
    Similarly, we let $W_d$ denote the sum of the spaces of matrix coefficients for all representations of level $d$, and we denote by $f^{= d}$ the projection of a function $f$ onto $W_d$.
\end{defn}

We use the following result of Keevash and Lifshitz~\cite{keevash2023sharp}.
\begin{thm}[{\cite[Theorem 4.1]{keevash2023sharp}}]\label{thm:level-d for global functions_intro}
There exists $C>0$, such that for any $n \in \mathbb{N}$ and for any $r>1$, if $A \subseteq S_n$ is $r$-global and $d\le \min(\tfrac{1}{8}\log(1/\mu(A)), 10^{-5}n)$, then 
\[\|1_A^{= d}\|_2^2\le \mu(A)^2 \left( C r^4 d^{-1} \log (1/\mu(A)) \right)^d,\]
where $\mu$ is the uniform measure on $S_n$.
\end{thm}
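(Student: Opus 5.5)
The plan is the standard route from hypercontractivity to a level-$d$ inequality: duality plus H\"older, with a hypercontractive estimate for low-degree global functions supplying the key bound. Write $f=1_A$, $\mu=\mu(A)$ and $g=f^{=d}$.

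Since $g$ is an orthogonal projection of $f$, we have $\|g\|_2^2=\langle f,g\rangle$. By H\"older, for any even integer $q\ge 2$ with dual exponent $q'$,
\[
\|g\|_2^2\le \|f\|_{q'}\,\|g\|_q=\mu^{1-1/q}\,\|g\|_q .
\]
So everything reduces to a hypercontractive inequality of the form
\[
\|g\|_q\le \bigl(C_1 q r^2/\sqrt{d}\bigr)^{d}\,\|g\|_2^{\theta}\,\mu^{1-\theta}
\]
for a suitable $\theta$, or more simply to a bound $\|g\|_q^q\le (C_1q)^{qd}\, r^{O(qd)}\,\mu^{\,q-1}\cdot(\text{normalising factor})$. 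Once such a bound is available, we rearrange and optimise $q\approx \log(1/\mu)/d$; the hypothesis $d\le\frac18\log(1/\mu)$ guarantees $q\ge 8$. This should yield $\|g\|_2^2\le\mu^2\,(Cr^4 d^{-1}\log(1/\mu))^d$. The factor $d^{-1}$ should come from the $d!$-type savings in the combinatorics of degree-$d$ functions, via $q^d/d^d$ after Stirling.

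The core step is hypercontractivity for global functions of degree $\le d$ on $S_n$, with $d\le 10^{-5}n$. I would prove it by coupling $S_n$ with a product-like space and using the known hypercontractivity of global functions on product spaces, as in Keevash--Long--Lifshitz--Minzer. Concretely:
\begin{enumerate}
\item Express the degree-$d$ part through \emph{derivatives} (Laplacians) $L_{I\to J}$ built from differences of restrictions to umvirates.
\item Define the relevant globalness as smallness of $\|(L_{S} g)_{I\to J}\|_2$.
\item Bound $\|g\|_q^q$ by expanding $g^{q/2}$ and controlling the contributions of the $d$-umvirate restrictions inductively on $d$.
\end{enumerate}

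The second ingredient is transferring globalness from $f$ to $g=f^{=d}$. A restriction of $f^{=d}$ to $U_{I\to J}$ is not simply $(f_{I\to J})^{=d}$. However, using that the projections onto levels commute with the two-sided $S_n$-action, and that a restriction can be written as an average of left/right translates, one can show $\|(f^{=d})_{I\to J}\|_2\le (Cr)^{|I|}\mu$-type bounds, at the cost of an extra factor $r^{O(1)}$ per coordinate; this is where $r^4$ should appear. It is here that the actual globalness hypothesis on $A$ (restrictions of $1_A$ have measure at most $r^{2|I|}\mu$) feeds in, through $\|f_{I\to J}\|_2^2=\mu(A_{I\to J})$.

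I expect the main obstacle to be the hypercontractive estimate itself in the non-product setting of $S_n$, where coordinates are dependent. The restriction $d\le 10^{-5}n$ is needed there, so that conditioning on $O(d)$ values of a permutation leaves a space that is close enough to $S_{n-O(d)}$ for the induction to close. I would first try to handle this with the coupling of uniform $S_n$ to a random injection model, with error terms absorbed into the constant $C$.
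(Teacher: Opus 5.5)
The paper gives no proof of this statement: it is quoted from Keevash--Lifshitz and used as a black box. So your proposal has to stand on its own, and it does not.

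The outer shell is fine. Writing $\|g\|_2^2=\langle 1_A,g\rangle\le \mu^{1-1/q}\|g\|_q$ and then taking $q\approx \log(1/\mu)/d$ is the standard way to turn hypercontractivity into a level-$d$ inequality. Note, though, that the factor $d^{-1}$ comes from this choice of $q$, not from $d!$-type savings. What you need is a bound $\|g\|_q\le (Cr^4q)^d\mu^{1-1/q}$; your first displayed target, with a factor $q/\sqrt d$ per degree, leads to a different shape.

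The real problem is that everything of substance is deferred to a sharp hypercontractive inequality for global functions on $S_n$, with only $C^d$ losses, valid all the way up to $d\le 10^{-5}n$. That inequality is precisely the main content of the cited Keevash--Lifshitz work. Your items 1--3 are headings rather than an argument. The mechanism you suggest, coupling $S_n$ with a product or random-injection model and absorbing the error terms into $C$, is essentially the route of the earlier work of Filmus, Kindler, Lifshitz and Minzer. That route gives only non-sharp dependence on $d$ and works only when $d$ is small compared with $n$. For $d$ linear in $n$ the coupling errors cannot be absorbed into a constant, and overcoming this is exactly what makes the cited result ``sharp''.

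The step transferring globalness from $1_A$ to $g=1_A^{=d}$ rests on a false claim. Restricting to $U_{I\to J}$ means multiplying by the indicator of a coset $\pi_0\,\mathrm{Stab}(I)$ of a pointwise stabiliser. Averaging over that coset is (the value at $\pi_0$ of) a right convolution, which does commute with the isotypic projections. But it only recovers the \emph{mean} of $g$ on $U_{I\to J}$, not $\|g_{I\to J}\|_2$, which is what globalness controls; multiplication by $1_{U_{I\to J}}$ does not commute with the projections. The known remedy is to pass to derivatives, i.e.\ differences of $f$ and its translates by transpositions, which do commute with the projections, and then bound restrictions of derivatives of $1_A$. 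On $S_n$ this is a genuinely technical step, and the proposal does not supply it.

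Finally, with this paper's convention that the level of $V_\lambda$ is the minimum of the strict levels of $\lambda$ and $\lambda'$, $W_d$ also contains the components with $\lambda'_1=n-d$. These are $\mathrm{sgn}$ times strict-level-$d$ functions, hence not of low degree. A plan phrased entirely for degree-$\le d$ functions must treat them separately. One way is via $\mathrm{sgn}\cdot 1_A=1_{A\cap A_n}-1_{A\setminus A_n}$, together with a version of the inequality under absolute rather than relative restriction bounds. This issue disappears for $A\subseteq A_n$, which is the case used later in the paper.
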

We also use the following lower bounds of Ellis, Friedgut and Pilpel~\cite{ellis2011intersecting} on the dimensions of low level irreducible representations.
\begin{lem}[{\cite[Claim 1 and Theorem 19]{ellis2011intersecting}}]\label{lem:ellis}
There exists $n_0 \in \mathbb{N}$, such that the following holds for all $n>n_0$. Let $d\le n/200$ and let $\chi$ be an irreducible character of $S_n$ of level $\ge d$. Then $\chi(1) \ge \left(\frac{n}{ed}\right)^d.$ 
\end{lem}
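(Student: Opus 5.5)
The plan is to prove the bound directly with the hook length formula, reducing to one shape parameter. Write $\chi=\chi_\lambda$ with $\lambda\vdash n$. Since $\chi_{\lambda'}=\chi_\lambda\cdot\mathrm{sgn}$ has the same dimension and the level is symmetric under conjugation, we may replace $\lambda$ by $\lambda'$ if necessary and assume $\lambda_1\ge\lambda'_1$. Set $k=n-\lambda_1$. The hypothesis that the level is $\ge d$ then says $k\ge d$. Two regimes will be treated separately: $k\le n/100$ (the main case), and $k>n/100$.

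\emph{Main case $d\le k\le n/100$.} Let $\bar\lambda\vdash k$ be $\lambda$ with its first row deleted. The hooks of the boxes outside the first row are exactly the hooks of $\bar\lambda$, and their product is $k!/f^{\bar\lambda}\le k!$. In the first row, the box in column $j$ has hook $\lambda_1-j+\lambda'_j$.
\begin{itemize}
\item For $j>\lambda_2$ this hook equals $\lambda_1-j+1$.
\item For $j\le\lambda_2\le k$ it is at most $n$.
\end{itemize}
Hence the product of the first-row hooks is at most $n^{\lambda_2}(\lambda_1-\lambda_2)!\le n^{k}(n-k-\lambda_2)!$, and the hook length formula gives
\[
\chi(1)\;\ge\;\frac{n!}{n^{\lambda_2}\,(n-k-\lambda_2)!\,k!}\;\ge\;\frac{(n-2k)^{k+\lambda_2}}{n^{\lambda_2}\,k!}\;\ge\;\frac{(n-2k)^{2k}}{n^{k}\,k!}.
\]
Next we use $k!\le e\sqrt{k}\,(k/e)^k$ and $(1-2k/n)^{2k}\ge e^{-8k^2/n}$, which is harmless for $k\le n/100$. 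This should give $\chi(1)\ge \bigl(\tfrac{en}{k}\bigr)^{k}e^{-O(k^2/n)-O(\log k)}\ge (n/(ek))^k$, with room to spare. The slack of a factor $e^{2k}$ absorbs both error terms. Finally, $x\mapsto (n/(ex))^x$ is increasing for $x\le n/e^2$, so $k\ge d$ gives $(n/(ek))^k\ge (n/(ed))^d$.

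\emph{Large case $k>n/100$.} Here both $\lambda_1$ and $\lambda'_1$ are at most $\tfrac{99}{100}n$. The target is at most $\max_{d\le n/200}(n/(ed))^d=(200/e)^{n/200}=e^{cn}$ with $c\approx 0.0215$, so it suffices to show $\chi(1)\ge e^{c'n}$ with $c'>c$. By the branching rule, $\chi(1)\ge\dim V_\mu$ for any $\mu\subseteq\lambda$, and even $\chi(1)\ge\binom{n}{m}\dim V_\mu\dim V_\nu$ for a skew decomposition. I would first try to choose $\mu\vdash m$ whose first row has length exactly $m-k'$, with $k'$ a fixed fraction of $m$ but at most $m/100$, and apply the main case to $\mu$. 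If the constants from this are too weak, the fallback is a direct hook-length estimate. Every hook is at most $\lambda_1+\lambda'_1\le\tfrac{199}{100}n$, while $n!$ is of size $(n/e)^n$, so a counting bound on the number of boxes with large hooks should give an exponential lower bound with a constant comfortably larger than $0.0215$.

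The main obstacle is this large-$k$ regime. The crude bound $\chi(1)\ge e^{c'n}$ has to beat $(200/e)^{n/200}$ by a definite margin, and the constants must be tracked carefully, especially near the boundary $k\approx n/100$ where the two regimes meet. The main case, by contrast, is routine once the hook product of the first row is bounded as above.
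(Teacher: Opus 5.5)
The paper gives no proof of this lemma; it cites Ellis--Friedgut--Pilpel. So the only question is whether your argument is complete, and it is not yet.

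Your main case is correct. The first-row hook estimate gives $\chi(1)\ge n^k(1-2k/n)^{2k}/k!$. Together with $k!\le e\sqrt{k}\,(k/e)^k$ and the monotonicity of $x\mapsto (n/(ex))^x$ on $(0,n/e^2]$, this settles $d\le k\le n/100$.

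The gap is the regime $k>n/100$, i.e.\ $\lambda'_1\le\lambda_1\le 0.99n$. This is a substantive part of the statement, not a boundary effect. Shapes like $(n/2,n/2)$ or a staircase have level far above $n/100$, and for $d=\lfloor n/200\rfloor$ they require $\chi(1)\ge (200/e)^{n/200}\approx e^{0.0215n}$. You isolate this target correctly but do not prove it, and none of your three sketched routes works as stated.
\begin{itemize}
\item Passing to a subshape: any $\mu\subseteq\lambda$ has $\mu_1\le\lambda_1$, so insisting on $\mu_1\ge 0.99|\mu|$ forces $|\mu|\le\lambda_1/0.99$. The main case then returns at most about $e^{0.036\lambda_1}$. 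This beats $e^{0.0215n}$ only when $\lambda_1\gtrsim 0.6n$, and is useless when $\lambda_1$ (hence $\lambda'_1$) is $o(n)$.
\item The skew inequality: $\chi(1)\ge\binom{n}{m}\dim V_\mu\dim V_\nu$ runs the wrong way. The right-hand side is $\dim\mathrm{Ind}_{S_m\times S_{n-m}}^{S_n}(V_\mu\otimes V_\nu)$, a module that contains $V_\lambda$; try $\lambda=(2)$, $\mu=\nu=(1)$. What is true is $f^\lambda\ge f^\rho f^{\lambda/\rho}$, with a binomial factor only when the skew shape $\lambda/\rho$ is disconnected.
\item The fallback: bounding every hook by $1.99n$ gives $n!/(1.99n)^n<1$. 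The ``counting bound on boxes with large hooks'' meant to replace it is exactly the missing estimate, not a matter of tracking constants.
\end{itemize}

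One way to close the gap with tools you already use is as follows.
\begin{itemize}
\item With $a(b),l(b)$ the arm and leg lengths, $h(b)=a(b)+l(b)+1\le (a(b)+1)(l(b)+1)$. So the hook formula gives $\chi(1)\ge n!/P(\lambda)$, where $P(\lambda)=\prod_i\lambda_i!\prod_j\lambda'_j!$.
\item Peeling off the first row, $P(\lambda)=\lambda_1!\,\bigl(\prod_{j\le\lambda_1}\lambda'_j\bigr)\,P(\bar\lambda)$. Since the $\lambda'_j$ sum to $n$, AM--GM bounds the middle factor by $(n/\lambda_1)^{\lambda_1}\le\binom{n}{\lambda_1}$.
\item Iterating this peeling shows $P(\mu)\le|\mu|!$ for every partition $\mu$.
\item Keep the factor $(n/\lambda_1)^{\lambda_1}$ for the first row of $\lambda$ and use $P(\bar\lambda)\le k!$. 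This yields
\[
\chi(1)\;\ge\;\binom{n}{k}\Bigl(1-\frac{k}{n}\Bigr)^{n-k}\;\ge\;\frac{(n/k)^k}{n+1},\qquad k=n-\lambda_1 .
\]
\item For $n/100<k\le 0.91n$ this is at least $e^{0.046n}/(n+1)$.
\item If $\lambda'_1\le\lambda_1<0.09n$, every hook is below $0.18n$, so $\chi(1)\ge n!/(0.18n)^n>2^n$.
\end{itemize}
The middle expression is also $\ge (n/(ek))^k$ for every $k$, so it recovers your main case as well.
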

In~\cite{KLS23}, the following proposition was deduced from Theorem~\ref{thm:level-d for global functions_intro} and Lemma~\ref{lem:ellis}:
\begin{prop}[\cite{KLS23}, Proposition~3.3]\label{prop:3.3}
    For any $\epsilon>0$ there exist $\delta,n_0>0$, such that the following holds for all $n>n_0$. Let $\alpha<1-\epsilon$ and let $A\subseteq A_n$ be an $n^\delta$-global set of density $\mu_{A_n}(A)\ge e^{-n^{\alpha}}$. Write $g = \frac{1_A}{\mu_{A_n}(A)}$. Then $\|g^{=\chi}\|_2^2\le \chi(1)^{\alpha+\epsilon}$ for any $\chi \in \widehat{A_n}$.
\end{prop}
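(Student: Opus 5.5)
The plan is to derive Proposition~\ref{prop:3.3} from Theorem~\ref{thm:level-d for global functions_intro} and Lemma~\ref{lem:ellis}, working over $S_n$ and comparing each $\chi\in\widehat{A_n}$ with a character of $S_n$ of the same level.

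\textbf{Step 1: transfer to $S_n$.} View $A\subseteq A_n$ as a subset of $S_n$. Then $\mu_{S_n}(A)=\mu_{A_n}(A)/2$, and the globalness constant changes by at most a constant factor: restrictions to umvirates inside $A_n$ correspond to restrictions in $S_n$ up to a factor of $2$ in measure. Every $\chi\in\widehat{A_n}$ is a constituent of the restriction of some $\psi_\lambda\in\widehat{S_n}$, and $\chi(1)\ge \psi_\lambda(1)/2$. The projection of $1_A$ onto $W_\chi$ (in $A_n$) is controlled by the projection of $1_A$ (in $S_n$) onto $W_{\psi_\lambda}+W_{\psi_{\lambda'}}$. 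Both of these have level $d$, the level of $\lambda$. Up to constant factors from the normalizations, this gives
\[
\|g^{=\chi}\|_2^2\lesssim \mu(A)^{-2}\|1_A^{=d}\|_2^2 ,
\]
where $\mu=\mu_{S_n}$ and $g$ is defined with respect to $A_n$.

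\textbf{Step 2: small levels.} Suppose $d\le \min(\tfrac18\log(1/\mu(A)),10^{-5}n)$. Theorem~\ref{thm:level-d for global functions_intro} with $r=n^{\delta}$ and $\log(1/\mu(A))\le n^{\alpha}+1$ gives
\[
\|g^{=\chi}\|_2^2\le \left(C n^{4\delta} n^{\alpha}/d\right)^d .
\]
If $d\le n/200$, Lemma~\ref{lem:ellis} gives $\chi(1)\ge \tfrac12 (n/(ed))^d$. So we need
\[
C n^{4\delta+\alpha}/d \le \left(n/(ed)\right)^{\alpha+\epsilon}
\]
up to constants. Rearranging, this says $C' n^{4\delta} d^{\alpha+\epsilon-1}\le n^{\epsilon}$. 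It holds because $d\ge1$, $\alpha+\epsilon-1<0$ (indeed $\alpha<1-\epsilon$), and $\delta<\epsilon/8$ with $n$ large. The case $d=0$ is trivial, since then $\chi$ is the trivial or sign character and $\|g^{=\chi}\|_2^2\le 1$ or so.

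\textbf{Step 3: large levels (main obstacle).} Here $d>\tfrac18\log(1/\mu(A))$ or $d>10^{-5}n$, and the theorem does not apply directly. I would use the trivial bound $\|g^{=\chi}\|_2^2\le \|g\|_2^2=1/\mu(A)\le e^{n^\alpha}$ and compare it with $\chi(1)$.

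If $d$ is large but $d\le n/200$, then $\chi(1)^{\alpha+\epsilon}\ge (n/(ed))^{d(\alpha+\epsilon)}$. With $d\gtrsim n^{\alpha}$, this is roughly $e^{(\alpha+\epsilon) d\log(n/(ed))}$. The exponent $d\log(n/(ed))$ is increasing on this range, so it is at least about $n^{\alpha}\cdot(1-\alpha)\log n\gg n^{\alpha}$, using $\alpha<1-\epsilon$. Hence $\chi(1)^{\alpha+\epsilon}\ge e^{n^\alpha}$.

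For $d\ge n/200$, monotonicity of the Ellis–Friedgut–Pilpel bound in the level (level $\ge d$ implies level $\ge n/200$) gives $\chi(1)\ge 200/e)^{n/200}$. This is $e^{\Omega(n)}$, which dominates $e^{n^{\alpha}}$ since $\alpha<1$.

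One subtlety remains: if $\log(1/\mu(A))$ is very small (density close to $1$), the window in Step 2 may be empty. In that case the trivial bound $1/\mu(A)=O(1)$ is handled by taking $\chi(1)$ large or absorbing constants into the $\epsilon$. The genuinely delicate point is making the constants in the $A_n$-to-$S_n$ transfer and the thresholds uniform, and that is where I expect the work to lie.
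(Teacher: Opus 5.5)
Your route (pass to $S_n$, apply the Keevash--Lifshitz level-$d$ inequality, Theorem~\ref{thm:level-d for global functions_intro}, together with the Ellis--Friedgut--Pilpel bound, Lemma~\ref{lem:ellis}, at low levels, and use the trivial bound $\|g^{=\chi}\|_2^2\le\|g\|_2^2$ at high levels) is the deduction the paper attributes to \cite{KLS23}. Steps~1 and~2 are sound. The transfer in Step~1 is in fact exact: multiplication by $\mathrm{sgn}$ commutes with the projection onto $W_{\psi_\lambda}+W_{\psi_{\lambda'}}$, so the projection of $1_A$ stays supported on $A_n$. This gives $\|g^{=\chi}\|_2^2\le \|1_A^{=d}\|_2^2/(2\mu(A)^2)$, so the constants are not where the difficulty lies.

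The gap is in Step~3. You leave the range of the level-$d$ theorem as soon as $d>\tfrac18\log(1/\mu(A))$, and then argue as if $d\gtrsim n^{\alpha}$. But the hypothesis only bounds $\mu(A)$ from \emph{below}, so $\log(1/\mu(A))$ can lie anywhere between $\log 2$ and $n^{\alpha}+\log 2$. Your closing remark covers only the extreme case $\mu(A)=\Omega(1)$. In the intermediate regime your comparison is false. Take a fixed $\alpha>0$, $\mu(A)=e^{-\log^2 n}$ and $d=\lceil\log^2 n/8\rceil+1$. The character of level $d$ indexed by $(n-d,d)$ then has $\chi(1)^{\alpha+\epsilon}=e^{O(\log^3 n)}$, far below the bound $e^{n^{\alpha}}$ you use for $\|g\|_2^2$.

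The repair is to use the density-adapted trivial bound. The condition $d>\tfrac18\log(1/\mu(A))$ gives $\|g^{=\chi}\|_2^2\le 1/\mu_{A_n}(A)<e^{8d}$, while $\chi(1)^{\alpha+\epsilon}\ge\tfrac12\,(n/(ed))^{(\alpha+\epsilon)d}$. The second bound wins whenever $(\alpha+\epsilon)\log(n/(ed))\ge 9$. With $\alpha\ge 0$, which the statement tacitly assumes, this holds for all $d\le e^{-1-9/\epsilon}n$.

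For larger $d$, apply Lemma~\ref{lem:ellis} at level $\min(d,\lfloor n/200\rfloor)$. This gives $\chi(1)^{\alpha+\epsilon}\ge e^{\Omega_\epsilon(n)}\ge e^{n^{\alpha}}\ge\|g\|_2^2$, as in your last case.

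Equivalently, since the conclusion only weakens as $\alpha$ increases, you could first replace $\alpha$ by $\max\{0,\log\log(1/\mu_{A_n}(A))/\log n\}$. After that normalisation your inference $d\gtrsim n^{\alpha}$ is valid, and your Step~3 computation goes through unchanged.
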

By the same method, one can prove the following: 
\begin{prop}\label{prop:globalness-bounds}
    For every $C>0$ there exists $n_0>0$, such that the following holds for all $n>n_0$. Let  $A\subseteq A_n$ be an $n^{0.01}$-global set of density $\mu_{A_n}(A)\ge n^{-C}$. Write $g = \frac{1_A}{\mu_{A_n}(A)}$. Then $\|g^{=\chi}\|_2^2\le \chi(1)^{0.05}$ for any $\chi \in \widehat{A_n}$.
\end{prop}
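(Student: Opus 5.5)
We deduce the statement from Theorem~\ref{thm:level-d for global functions_intro} and Lemma~\ref{lem:ellis}, following the method of Proposition~3.3 of~\cite{KLS23}. We treat $A\subseteq A_n$ as a subset of $S_n$ and split into the cases of low level and high level characters. First we transfer to $S_n$. Viewed in $S_n$, $A$ has density $\mu_{S_n}(A)=\mu_{A_n}(A)/2\ge n^{-C}/2$. Its restrictions to umvirates change by at most a constant factor, since an umvirate of $S_n$ meets $A_n$ in about half of its elements when $|I|\le n-2$. Hence $A$ is $O(n^{0.01})$-global in $S_n$. Every irreducible character of $A_n$ is a constituent of the restriction of some $\chi_\lambda\in\widehat{S_n}$. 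The projection of $1_A$ onto $W_\chi$ in $A_n$ is controlled by the projections of $1_A$ (extended by zero) onto $W_{\chi_\lambda}$ and $W_{\chi_{\lambda'}}$ in $S_n$, and the dimensions agree up to a factor of $2$. It therefore suffices to show $\|g^{=\chi_\lambda}\|_2^2\le \chi_\lambda(1)^{0.04}$ for $\lambda$ of level $d$, where $g$ is normalized in $S_n$. The norms differ only by constant factors, which are absorbed for large $n$.

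\emph{Low levels.} Let $\log(1/\mu)\le C\log n+1$ and set $d_0=\lfloor \tfrac18\log(1/\mu)\rfloor$. For $1\le d\le d_0$ we apply Theorem~\ref{thm:level-d for global functions_intro} with $r=O(n^{0.01})$. This gives
\[
\|g^{=d}\|_2^2\le \bigl(C' n^{0.04} d^{-1}\log n\bigr)^d .
\]
By Lemma~\ref{lem:ellis}, every $\chi$ of level $d$ satisfies $\chi(1)\ge (n/(ed))^d\ge n^{d(1-o(1))}$, because $d=O(\log n)$. Since $\|g^{=\chi}\|_2\le\|g^{=d}\|_2$, we obtain $\|g^{=\chi}\|_2^2\le n^{0.04d+o(d)}\le \chi(1)^{0.05}$. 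Level $0$ is trivial, since there $\|g^{=\chi}\|_2^2=1$.

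\emph{High levels.} For $d>d_0$ we cannot apply the theorem directly. Instead we use the trivial bound $\|g^{=\chi}\|_2^2\le \|g\|_2^2=1/\mu\le 2n^{C}$. If $d\ge d_0$ is at least a large multiple of $C$, Lemma~\ref{lem:ellis} gives $\chi(1)\ge (n/(ed))^{\min(d,n/200)}$, which is at least $n^{20C+1}$ once $d\ge 21C$ and $n$ is large. For levels above $n/200$ the dimension is exponential in $n$. Either way $\chi(1)^{0.05}\ge 2n^C$. The remaining window is the bounded range $d_0<d<21C$. The main obstacle is that when $\mu$ is large, $d_0$ can be smaller than $21C$.

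To handle this window, we apply Theorem~\ref{thm:level-d for global functions_intro} to a subset of smaller density. We replace $A$ by $A\cap U$ for a random umvirate, or equivalently pass to a sub-family of density about $n^{-K}$ for a large $K=K(C)$, so that $\tfrac18\log(1/\mu)\ge 21C$. Globalness is preserved up to constants, and we must check that the level-$d$ projections are not reduced. Alternatively, we use the monotone form of the level-$d$ inequality, $\|1_A^{=d}\|_2^2\le \mu^2\bigl(C r^4\max(d,\log(1/\mu))/d\bigr)^d$. This form follows by the same hypercontractive proof and holds for every $d\le 10^{-5}n$ with $\log(1/\mu)$ replaced by $\max(\log(1/\mu),d)$. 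With it, bounded $d$ gives $\|g^{=d}\|_2^2\le (C'n^{0.04}\log n)^d$, and this is again at most $\chi(1)^{0.05}$ by Lemma~\ref{lem:ellis}. Checking that this extended version of the level-$d$ bound holds is the step we expect to require the most care.
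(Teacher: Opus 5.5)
Your skeleton matches the method the paper points to (it omits this proof and cites \cite{KLS23}, Proposition~3.3). You pass to $S_n$, apply Theorem~\ref{thm:level-d for global functions_intro} for levels $d\le\frac18\log(1/\mu)$, and above that combine $\|g^{=\chi}\|_2^2\le\|g\|_2^2=1/\mu$ with Lemma~\ref{lem:ellis}. Your transfer to $S_n$ and your low-level computation are fine.

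The gap is in the high-level case. You bound $1/\mu$ by $2n^C$, which creates the window $d_0<d<21C$, and you then rest that window on two unverified devices.
\begin{itemize}
\item The first does not work as stated. Level-$d$ projections are not monotone under passing to a sub-family. Also, a restriction $A\cap U_{I\to J}$ viewed in $S_n$ is not $O(n^{0.01})$-global: restricting it to $U_{I\to J}$ itself raises its density by a factor of about $n^{|I|}$. So Theorem~\ref{thm:level-d for global functions_intro} cannot be applied to it with $r=O(n^{0.01})$.
\item The second, an ``extended'' level-$d$ inequality obtained by rerunning the hypercontractive proof, is only asserted. You yourself flag it as the step needing the most care.
\end{itemize}

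The missing observation is that the case assumption already bounds the density in terms of $d$. Indeed, $d>d_0=\lfloor\frac18\log(1/\mu)\rfloor$ means $\log(1/\mu)<8d$, so $\|g^{=\chi}\|_2^2\le 1/\mu<e^{8d}$.
\begin{itemize}
\item If $1\le d\le e^{-161}n$, Lemma~\ref{lem:ellis} gives $\chi(1)^{0.05}\ge (n/(ed))^{0.05d}\ge e^{8d}$.
\item If $d>e^{-161}n$, the lemma applied with $\min(d,\lfloor n/200\rfloor)$ gives $\chi(1)\ge e^{\Omega(n)}$, which beats $1/\mu\le 2n^C$.
\end{itemize}
So there is no window, and no new analytic input is needed.

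The same computation shows that your ``monotone form'' holds for a trivial reason rather than by hypercontractivity. For $d>\frac18\log(1/\mu)$ and $Cr^4\ge e^8$ we have $\mu^2(Cr^4)^d\ge\mu^2e^{8d}>\mu\ge\|1_A^{=d}\|_2^2$.
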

The proof, which is similar to the proof of Proposition~\ref{prop:3.3} in~\cite{KLS23}, is omitted.

\section{Globalness and measure decay of restricted conjugacy classes}
\label{sec:globalness}

Let $A \subseteq S_n$ be a conjugacy class, and let
$U_{I\to J}:=\{\pi\in S_n:\ \pi(i_r)=j_r\ \forall r\}$ 
be a $t$-umvirate, where $I=(i_1,\dots,i_t)$ and $J=(j_1,\dots,j_t)$. This section is devoted to analyzing properties of restricted conjugacy classes of the form $A\cap U_{I\to J}$ that will be used in the proof of the main result. We first present a method for computing the measure of restricted conjugacy classes.

For an umvirate $U_{I \to J}$, define its restriction graph $G$ to be the directed graph with vertex set $I \cup J$ and edges $i_r \to j_r$.

\begin{defn}\label{def:embedding}
For a permutation $\tau\in S_n$, let $D(\tau)$ be the directed graph on $[n]$ with edges
\[
x\to \tau(x)\qquad (x\in[n]).
\]
Given a directed graph $G$ with $|V(G)| \le n$, an \emph{embedding} of $G$ into $D(\tau)$ is an injective map
$f:V(G)\hookrightarrow [n]$ such that for every edge $u\to v$ of $G$ one has
$f(v)=\tau(f(u))$. Let $\Emb_\tau(G)$ denote the set of these embeddings.
\end{defn}

Figure~\ref{fig:embedding_example} illustrates an embedding of a restriction graph $G$ (consisting of a 1-chain and a 2-chain) into a permutation $\tau$. The embedding $f$ maps the vertices of $G$ to the elements of $[n]$ such that the directed edges of $G$ perfectly align with the cyclic action of $\tau$.

\begin{figure}[htbp]
\centering
\begin{tikzpicture}[>=Stealth, scale=1]
    
    \node at (-3, 1.8) {\large Restriction Graph $G$};
    
    \filldraw[blue] (-3.5, 0.8) circle (1.5pt);
    \filldraw[blue] (-2.5, 0.8) circle (1.5pt);
    \draw[->, thick, blue] (-3.5, 0.8) -- (-2.5, 0.8);
    
    \filldraw[blue] (-4, -0.2) circle (1.5pt);
    \filldraw[blue] (-3, -0.2) circle (1.5pt);
    \filldraw[blue] (-2, -0.2) circle (1.5pt);
    \draw[->, thick, blue] (-4, -0.2) -- (-3, -0.2);
    \draw[->, thick, blue] (-3, -0.2) -- (-2, -0.2);

    \node at (2.5, 2.2) {\large Large cycle of $\tau$};
    \draw[thick, gray, dashed] (2.5, 0) circle (1.6cm);
    
    \tikzset{embedded/.style={-{Stealth[length=2.8mm, width=2.3mm]}, ultra thick, blue}}

    \draw[embedded] (2.5,0) ++(60:1.6cm) arc (60:35:1.6cm);
    \filldraw[blue] (2.5,0) ++(60:1.6cm) circle (1.5pt);
    \filldraw[blue] (2.5,0) ++(35:1.6cm) circle (1.5pt);

    \draw[embedded] (2.5,0) ++(210:1.6cm) arc (210:185:1.6cm);
    \draw[embedded] (2.5,0) ++(185:1.6cm) arc (185:160:1.6cm);
    \filldraw[blue] (2.5,0) ++(210:1.6cm) circle (1.5pt);
    \filldraw[blue] (2.5,0) ++(185:1.6cm) circle (1.5pt);
    \filldraw[blue] (2.5,0) ++(160:1.6cm) circle (1.5pt);

    \draw[-{Stealth[length=3.5mm, width=2.5mm]}, dashed, very thick, red] (-1.5, 0.3) to[bend left=12] node[above, font=\large] {$f$} (0.6, 0.3);

\end{tikzpicture}
\caption{An embedding $f \in \Emb_\tau(G)$. The short disjoint chains of $G$ (left) are mapped to small contiguous arcs along the cycles of $\tau$ (right).}
\label{fig:embedding_example}
\end{figure}

The conditional probability $\mu(A \mid U_{I \to J})$ can be expressed exactly by counting graph embeddings:
\begin{lem}[Exact count under $\mu_A$ via embeddings]\label{lem:embcount}
Let $U_{I \to J}$ be an umvirate, let $G$ be its restriction graph, and let
$m := |V(G)|$ and $k = |E(G)|$. Let $A \subseteq S_n$ be a conjugacy class and fix $\tau \in A$. Then
\[
\frac{\mu(A \mid U_{I \to J})}{\mu(A)} = \frac{|\Emb_\tau(G)|}{(n-k)_{m-k}},
\qquad
(a)_b := a(a-1)\cdots(a-b+1).
\]
\end{lem}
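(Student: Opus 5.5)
We prove the identity by double counting pairs (permutation in $A$, embedding), using the transitivity of conjugation on $A$. Write $\mu(A\mid U_{I\to J}) = |A\cap U_{I\to J}|/|U_{I\to J}|$ and $\mu(A)=|A|/n!$. First we compute $|U_{I\to J}|$. The umvirate fixes the images of the $t=k$ distinct points $i_1,\dots,i_k$ to distinct values, so $|U_{I\to J}| = (n-k)!$. Hence the ratio equals
\[
\frac{|A\cap U_{I\to J}|\, n!}{|A|\,(n-k)!},
\]
and it remains to show $|A\cap U_{I\to J}| = |A|\,|\Emb_\tau(G)|\,(n-m)!/n!$. Indeed, $(n-k)!/(n-m)! = (n-k)_{m-k}$, which gives the claimed formula.

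To count $A\cap U_{I\to J}$, we reformulate membership as an embedding condition. A permutation $\pi$ lies in $U_{I\to J}$ iff $\pi(i_r)=j_r$ for all $r$. This is exactly the statement that the inclusion map $\iota: V(G)\hookrightarrow[n]$ is an embedding of $G$ into $D(\pi)$. So $|A\cap U_{I\to J}| = \#\{\pi\in A : \iota\in \Emb_\pi(G)\}$.

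Next we double count pairs $(\pi,f)$ with $\pi\in A$ and $f\in\Emb_\pi(G)$. Every $\pi\in A$ has the form $\pi = g\tau g^{-1}$ for some $g\in S_n$. Then $f\in \Emb_\pi(G)$ iff $g^{-1}\circ f\in \Emb_\tau(G)$, since $\pi(f(u)) = f(v)$ iff $\tau(g^{-1}f(u)) = g^{-1}f(v)$. Thus $|\Emb_\pi(G)| = |\Emb_\tau(G)|$ for all $\pi\in A$, and the total number of pairs is $|A|\,|\Emb_\tau(G)|$.

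On the other hand, $S_n$ acts on the pairs by $g\cdot(\pi,f) = (g\pi g^{-1}, g\circ f)$, and this action is compatible with the embedding condition. Since $S_n$ acts transitively on the injections $V(G)\hookrightarrow[n]$, the number of pairs with a given injection $f$ is independent of $f$. There are $(n)_m = n!/(n-m)!$ injections, so the number of pairs with $f=\iota$ is
\[
\frac{|A|\,|\Emb_\tau(G)|\,(n-m)!}{n!}.
\]
This is exactly the required expression for $|A\cap U_{I\to J}|$, and substituting it into the first display completes the proof.

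The only points needing care are bookkeeping. The set $V(G)$ has $m$ elements, possibly fewer than $2k$ because the $i$'s and $j$'s may overlap. The count $|U_{I\to J}| = (n-k)!$ uses that $I$ and $J$ each consist of distinct entries. This bookkeeping is the main potential pitfall; the rest is a standard orbit-counting argument.
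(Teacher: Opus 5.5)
Your proof is correct and is essentially the paper's argument. The paper applies Bayes' rule and observes that for uniform $\pi$, $\pi^{-1}\tau\pi\in U_{I\to J}$ iff $\pi|_{V(G)}\in\Emb_\tau(G)$, where $\pi|_{V(G)}$ is a uniform injection. Your double count of pairs $(\pi,f)$, whose fibers over injections $f$ all have the same size, is the counting form of the same conjugation-equivariance observation.
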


\begin{proof}
By Bayes' rule,
\[
    \frac{\mu(A \mid U_{I \to J})}{\mu(A)} = \frac{\mu(U_{I \to J}\mid A)}{\mu(U_{I \to J})} = \mu(U_{I \to J}\mid A)\cdot (n)_k.
\]
Since $(n)_k \cdot (n-k)_{m-k} = (n)_m$, it remains to show that
\[
\mu(U_{I \to J}\mid A)
=\frac{|\Emb_\tau(G)|}{(n)_m}.
\]

Let $\sigma=\pi^{-1}\tau\pi$ with $\pi$ uniform in $\Sn$, so $\sigma$ is uniform in $A$.
For an edge constraint $\sigma(u)=v$ we have
\[
\pi^{-1}\tau\pi(u)=v \iff \tau(\pi(u))=\pi(v).
\]
Thus, $f:=\pi\!\restriction_{V(G)}$ is a random injective function, and the event $\sigma\in U_q$ is equivalent to $f \in \Emb_\tau(G)$. The statement follows since there are exactly $(n)_m$ injective functions from $[n]$ to $V(G)$.
\end{proof}

Lemma~\ref{lem:embcount} provides an exact expression for the effect of a restriction. We now use this expression to derive lower and upper bounds in the cases needed below.
\begin{lem}[Measure lower bound after restriction]
\label{lem:measure_lower_bound}
For every $C > 0$, set $c := \frac{1}{2 C}$. For every sufficiently large $n$, let $A \subseteq S_n$ be a conjugacy class with $\cyc(A) \le C$, and let $U_{I' \to J'}$ be an umvirate of size $k = |I'| = |J'| < cn$ consisting solely of disjoint chains. Then:
\[
\mu(A \mid U_{I' \to J'}) \ge \frac{1}{2} \mu(A).
\]
\end{lem}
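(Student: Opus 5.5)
The plan is to use Lemma~\ref{lem:embcount} and reduce everything to counting embeddings of a disjoint union of directed paths into $D(\tau)$, where $\tau\in A$ has at most $C$ cycles. Let the umvirate consist of $s$ disjoint chains with $l_1,\dots,l_s\ge 1$ edges each. Then $k=\sum_i l_i$ and $m=|V(G)|=k+s\le 2k$. By Lemma~\ref{lem:embcount} it suffices to show
\[
|\Emb_\tau(G)| \ \ge\ \tfrac12\,(n-k)_{s}.
\]
An embedding of $G$ is the same as a placement of $s$ labeled, pairwise disjoint ``blocks'' along the cycles of $\tau$. Block $i$ is a run of $l_i+1$ consecutive points $x,\tau x,\dots,\tau^{l_i}x$, all distinct.

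\textbf{Step 1 (linearize).} In each cycle of $\tau$ choose a cut point, turning the cycle into a path. Concatenate these at most $C$ paths into a single line of $n$ points, so that there are at most $C-1$ ``junctions'' on the line, plus at most $C$ wrap-around places inside cycles that we simply ignore. Every placement of the $s$ blocks on the line that does not straddle a junction is a genuine embedding into $D(\tau)$. Distinct placements give distinct embeddings, and the consecutive points inside each block are distinct because they lie on a path segment of one cycle.

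\textbf{Step 2 (count on a line).} The number of placements of $s$ labeled disjoint blocks of sizes $l_i+1$ on a line of $n$ points is exactly $(n-k)_s$. To see this, contract each block to a single point: this gives a bijection with ordered choices of $s$ distinct positions in a line of $n-k$ points.

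\textbf{Step 3 (subtract bad placements).} A placement is bad if some block $i$ straddles some junction. For a fixed $i$ and a fixed junction, the block must cover the junction, which leaves at most $l_i$ positions for it. Once it is fixed, contract it and count the placements of the other $s-1$ blocks; this gives at most $(n-k-1)_{s-1}$ options. Summing over the $s$ blocks and the at most $C$ junctions (and wrap points), the number of bad placements is at most
\[
C\Big(\sum_i l_i\Big)(n-k-1)_{s-1} \;=\; Ck\,(n-k-1)_{s-1} \;=\; \frac{Ck}{n-k}\,(n-k)_s .
\]
Hence $|\Emb_\tau(G)|\ge (1-\frac{Ck}{n-k})(n-k)_s$. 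With $k<cn$ and $c=\frac1{2C}$ we have $Ck<\tfrac n2$, which makes the error term roughly $\tfrac12$. Getting it to be at most exactly $\tfrac12$ is the only delicate point.

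\textbf{Main obstacle.} The hard part is the constant bookkeeping in Step 3, so that the final ratio is at least $\tfrac12$ exactly with $c=1/(2C)$. The crude union bound above is borderline. To sharpen it, I would first use that only $C-1$ genuine junctions are created when the cut points are chosen at the start of each cycle. I would also count junction-straddling placements more precisely: a block of $l_i$ edges can straddle a given junction in exactly $l_i$ ways, not $l_i+1$. If this still falls short, I would handle the regime $k\le \frac{n}{2C}$ by averaging over the choice of cut points. The probability that a fixed embedded block of $l_i$ edges straddles the cut of its cycle is $l_i/L$, where $L$ is the cycle length. This replaces the factor $C$ by a smaller effective constant. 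Any residual slack is absorbed by taking $n$ sufficiently large.
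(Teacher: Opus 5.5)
Your proposal is correct, and it closes more easily than you expect. Wrap-around points never create bad placements, because line placements cannot wrap, so they do not belong in the Step~3 count. That leaves only the $R-1\le C-1$ genuine junctions, and your union bound gives
\[
|\Emb_\tau(G)|\ \ge\ \Bigl(1-\frac{(C-1)k}{n-k}\Bigr)(n-k)_s .
\]
Since $k/(n-k)$ is increasing in $k$ and $k<n/(2C)$,
\[
\frac{(C-1)k}{n-k}\ <\ \frac{C-1}{2C-1}\ <\ \frac12 .
\]
So your first sharpening already finishes the proof, for every $n$; it in fact works for all $k\le n/(2C-1)$. With $C$ places instead of $C-1$ you would get $C/(2C-1)>\tfrac12$, which is why it looked borderline.

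The other fallbacks are unnecessary, and two of them would not have helped anyway:
\begin{itemize}
\item ``Absorbing the residual slack by taking $n$ large'' cannot work, because the ratio $Ck/(n-k)$ at $k\approx n/(2C)$ does not depend on $n$.
\item Randomizing the cut points does not change the number of non-straddling line placements. That number depends only on the cycle lengths and their order.
\end{itemize}

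The paper takes a different route. It counts the $(n-k)_{m-k}$ acyclic embeddings into the single $n$-cycle $\rho=(1\,2\,\cdots\,n)$. It writes $\tau$ with its cycles as consecutive intervals, so that $\tau$ and $\rho$ disagree on at most $C$ points. It then applies a uniformly random rotation $\rho^j$ to an arbitrary $f\in\Emb_\rho(G)$. Each of the $k$ edge sources lands on a disagreement point with probability at most $C/n$, so $\rho^j\circ f\notin\Emb_\tau(G)$ with probability at most $kC/n<\tfrac12$. Since $f\mapsto\rho^j\circ f$ permutes $\Emb_\rho(G)$, this yields $|\Emb_\tau(G)|\ge\tfrac12|\Emb_\rho(G)|$.

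The two approaches trade the same quantities in opposite directions:
\begin{itemize}
\item The paper's rotation invariance gives denominator $n$ rather than your $n-k$. The cost is one extra bad point, the wrap $n\to 1$ of $\rho$, and it keeps wrapping embeddings of $\rho$ in the count.
\item Your deterministic line count discards all wrapping embeddings and loses $n-k$ in the denominator. In exchange it has only $R-1$ bad places and avoids the averaging and bijection step.
\end{itemize}
Both bookkeepings suffice for $c=1/(2C)$.
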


\begin{proof}
By Lemma~\ref{lem:embcount}, for any fixed $\tau \in A$,
\begin{equation}\label{eqn:bayes_lower}
    \frac{\mu(A \mid U_{I' \to J'})}{\mu(A)} = \frac{|\Emb_\tau(G)|}{(n-k)_{m-k}},
\end{equation}
where $G$ is the restriction graph of $U_{I' \to J'}$, $m = |V(G)|$ and $k = |E(G)|$. Since $G$ consists entirely of disjoint chains, it has exactly $N_c := m - k$ chains.

Let $\rho = (1, 2, \dots, n)$ be the canonical $n$-cycle permutation. We call an embedding $f \in \Emb_\rho(G)$ \emph{acyclic} if $f(u) \neq n$ for every directed edge $u \to v$ in $G$. To lower-bound $|\Emb_\rho(G)|$, it suffices to count the number of acyclic embeddings. Under an acyclic embedding, each chain in $G$ with $L_i$ edges ($L_i + 1$ vertices) is mapped to a contiguous integer interval $[x, x + L_i] \subseteq [n]$. Contracting the internal $L_i$ edges of each chain reduces the total required positions from $n$ to $n - \sum L_i = n - k$. Assigning the heads of the $N_c = m - k$ distinct chains to distinct positions among these $n - k$ contracted slots uniquely determines a valid acyclic embedding. Therefore, the number of acyclic embeddings is $(n - k)_{N_c}$, yielding
\[
|\Emb_\rho(G)| \ge (n - k)_{N_c} = (n - k)_{m - k}.
\]

Now fix $\tau \in A$, which has cycle count $R = \cyc(A) \le C$. Without loss of generality, represent $\tau$ as a product of disjoint cycles $\tau = (1 \dots \ell_1)(\ell_1+1 \dots \ell_1+\ell_2)\dots(\dots n)$, where $\ell_1 + \dots + \ell_R = n$. We convert every embedding $f \in \Emb_\rho(G)$ to a potential embedding of $G$ in $D(\tau)$, and bound the probability that it is a valid embedding. See Figure~\ref{fig:lower_bound_shift} for a macroscopic visualization of when the converted embedding is invalid. Formally, for an embedding $f \in \Emb_\rho(G)$ and a uniform random shift $j \sim [n]$, define $f_j := \rho^j \circ f$. Note that $f_j \in \Emb_\rho(G)$ for all $j$ and that $f_j \in \Emb_\tau(G)$ if and only if $\tau(f_j(u)) = f_j(v)$ for all $u \to v \in E(G)$. Since $f_j \in \Emb_\rho(G)$, we have $\rho(f_j(u)) = f_j(v)$ for all $u \to v \in E(G)$. Thus, $f_j \in \Emb_\tau(G)$ holds if and only if $\tau(y) = \rho(y)$ for all $y \in f_j(\Src)$, where $\Src$ is the set of sources of edges in $G$.
\begin{figure}[htbp]
\centering
\begin{tikzpicture}[>=Stealth]
    
    \tikzset{embedded/.style={-{Stealth[length=2.8mm, width=2.3mm]}, ultra thick, blue}}
    \tikzset{broken/.style={ultra thick, orange}}

    \node at (0, 3.5) {\large Sequence $\rho$};
    
    \draw[ultra thick, gray, ->] (-5, 2) -- (5, 2);
    
    \draw[red, thick, dashed] (0, 1.6) -- (0, 2.7) node[above, text=red] {Boundary};
    
    \draw[line width=3mm, blue, opacity=0.5] (-4, 2) -- (-2, 2);
    \node[above, blue] at (-3, 2.2) {Valid};

    \draw[line width=3mm, blue, opacity=0.5] (2, 2) -- (3.5, 2);
    \node[above, blue] at (2.75, 2.2) {Valid};

    \draw[line width=3mm, orange, opacity=0.5] (-0.8, 2) -- (0.8, 2);
    \node[below, orange] at (0, 1.7) {Invalid};

    \draw[->, ultra thick, dashed, gray] (0, 0.8) -- (0, -0.8) node[midway, right, text=black] {Map to $\tau$};

    
    \draw[thick, gray, dashed, ->] (-2.5, -2.8) circle (1.6cm);
    \node[gray] at (-2.5, -2.8) {Cycle 1};
    
    \draw[embedded] (-2.5,-2.8) ++(150:1.6cm) arc (150:90:1.6cm);
    
    \draw[broken, -] (-2.5,-2.8) ++(30:1.6cm) arc (30:0:1.6cm);
    \filldraw[orange] (-2.5,-2.8) ++(0:1.6cm) circle (2pt);

    \draw[thick, gray, dashed, ->] (2.5, -2.8) circle (1.6cm);
    \node[gray] at (2.5, -2.8) {Cycle 2};
    
    \draw[embedded] (2.5,-2.8) ++(90:1.6cm) arc (90:40:1.6cm);
    
    \draw[broken, -] (2.5,-2.8) ++(180:1.6cm) arc (180:140:1.6cm);
    \filldraw[orange] (2.5,-2.8) ++(180:1.6cm) circle (2pt);
    
    \node[orange, font=\small] at (0, -2.8) {Breaks!};

\end{tikzpicture}
\caption{A macroscopic view of the embedding conversion. Embedding chains into the canonical sequence $\rho$ (top) guarantees a valid embedding into $\tau$ (bottom) \emph{unless} a chain crosses one of the $R \le C$ boundaries where $\tau$ separates into disjoint cycles.}
\label{fig:lower_bound_shift}
\end{figure}

Notice that $\tau(y) = \rho(y) = y+1$ for all $y \in [n]$ except at the boundary element (the last element) of each of the $R \le C$ cycles of $\tau$ (see Figure~\ref{fig:lower_bound_shift}). Thus, there are at most $C$ values of $y \in [n]$ where $\tau(y) \neq \rho(y)$. For any fixed $u \in \Src$, as $j \sim [n]$ is uniform, $f_j(u) = \rho^j(f(u))$ is uniformly distributed over $[n]$. Hence,
\[
\Pr_{j \sim [n]} [\tau(f_j(u)) \neq \rho(f_j(u))] \le \frac{C}{n}.
\]
By the union bound over all $k = |\Src| = |E(G)|$ sources,
\[
\Pr_{j \sim [n]} [f_j \notin \Emb_\tau(G)] \le k \cdot \frac{C}{n} < cn \cdot \frac{C}{n} = c C = \frac{1}{2}.
\]
Consequently, the probability of $\rho^j \circ f \in \Emb_\tau(G)$ for uniform $j \sim [n]$ and $f \sim \Emb_\rho(G)$ is at least $\frac{1}{2}$. Moreover, for every fixed $j$ the map $f \mapsto \rho^j \circ f$ is a bijection on $\Emb_\rho(G)$, implying that the composition $\rho^j \circ f$ induces a uniform distribution on $\Emb_\rho(G)$. We obtain that
\[
\Pr_{f \sim \Emb_\rho(G)} [f \in \Emb_\tau(G)] \ge \frac{1}{2}
\]
and deduce that
\[
|\Emb_\tau(G)| \ge \frac{1}{2} |\Emb_\rho(G)| \ge \frac{1}{2} (n - k)_{m - k}.
\]
Substituting this bound into \eqref{eqn:bayes_lower} yields $\mu(A \mid U_{I' \to J'}) \ge \frac{1}{2} \mu(A)$.
\end{proof}

\begin{lem}[Measure upper bound after restriction]
\label{lem:measure_upper_bound}
For every $C > 0$, there exists $c > 0$ such that the following holds for all sufficiently large $n$. Let $A \subseteq S_n$ be a conjugacy class with $\cyc(A) \le C$, and let $U_{I' \to J'}$ be an umvirate of size $k = |I'| = |J'| < cn$ whose restriction graph $G$ consists of an arbitrary number of disjoint chains of length at most $C$, together with at most $C$ disjoint cycles of length at most $C$ each. Then:
\[
\mu(A \mid U_{I' \to J'}) \le O(\mu(A)).
\]
\end{lem}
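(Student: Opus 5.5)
By Lemma~\ref{lem:embcount} it suffices to show that $|\Emb_\tau(G)| \le O\big((n-k)_{m-k}\big)$ for a fixed $\tau\in A$. Here $m-k$ equals the number $N_c$ of chains of $G$, because each cycle component of $G$ has equally many vertices and edges. We factor an embedding into two parts. First we choose where the cycle components go. Then we choose where the chain components go, subject to injectivity.

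\textbf{Cycle components.} A cycle of length $L$ in $G$ must be mapped onto a full $L$-cycle of $\tau$. Since $\tau$ has at most $C$ cycles, it has at most $C$ cycles of length exactly $L$. Each such cycle admits $L\le C$ rotations. There are at most $C$ cycle components in $G$, so the number of ways to place them is at most $(C\cdot C)^{C}=O(1)$.

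\textbf{Chain components.} An embedding of a chain with $L_i$ edges is determined by the image of its head $x$, since the chain then occupies $x,\tau(x),\dots,\tau^{L_i}(x)$. This requires only that these $L_i+1$ points be distinct, which is automatic for chains of length at most $C$ unless some small cycle of $\tau$ interferes. The images of different chains must also be disjoint. Thus the task is to count ordered tuples of heads $(x_1,\dots,x_{N_c})$ whose forward orbits of lengths $L_i+1$ are pairwise disjoint.

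\textbf{Greedy counting.} Order the chains arbitrarily and place them one at a time. When placing chain $i$, the images of the previous chains cover $\sum_{j<i}(L_j+1)$ points. The head $x_i$ must avoid every point $y$ whose forward orbit $y,\dots,\tau^{L_i}(y)$ meets that covered set. Each covered point blocks itself together with its $L_i$ predecessors under $\tau$. This crude blocking count is too weak to give a matching bound, so the count has to be made exact.

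We do this with the contraction argument from Lemma~\ref{lem:measure_lower_bound}, run in reverse. Take the canonical cycle $\rho$ and work with the acyclic embeddings. Embeddings into $D(\tau)$ whose chains do not wrap around a cycle boundary of $\tau$ correspond injectively to embeddings into the path $1\to2\to\dots\to n$, after concatenating the cycles of $\tau$ in some fixed order. Those path embeddings number exactly $(n-k)_{N_c}$ by the contraction count. The remaining embeddings have at least one chain crossing one of the at most $C$ boundary points. Each boundary can be crossed in at most $C$ ways by a chain of length at most $C$, so we bound their number by summing over the choice of which chains cross which boundaries. If $s$ chains cross boundaries, there are at most $\binom{N_c}{s}(C^2)^s$ such choices, and the remaining chains are counted by $(n-k)_{N_c-s}$. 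Since $N_c\le k<cn$, this gives a total of
\[
\sum_{s}\binom{N_c}{s}C^{2s}(n-k)_{N_c-s}\le (n-k)_{N_c}\sum_s \Big(\frac{C^2 N_c}{n-k-N_c}\Big)^s/s!\cdot O(1)^s = O\big((n-k)_{N_c}\big).
\]
Here $s\le C$ because each boundary is crossed by at most one chain. Hence $|\Emb_\tau(G)|=O\big((n-k)_{m-k}\big)$ in the absence of cycles.

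\textbf{Combining the parts.} With the cycle components included, we first place the cycle components in $O(1)$ ways. Removing the occupied cycles of $\tau$ leaves a permutation on $n'=n-O(1)$ points with at most $C$ cycles, into which we embed the chains. This gives $O\big((n'-k')_{N_c}\big)\le O\big((n-k)_{N_c}\big)$, where $k'$ is the number of chain edges, so $n'-k'\le n-k$. The main obstacle is the boundary-crossing bookkeeping, together with checking that $(n'-k')_{N_c}$ versus $(n-k)_{N_c}$ loses at most a constant factor. The latter holds because $N_c\cdot O(1)/(n-k)$ is bounded when $N_c<cn$.
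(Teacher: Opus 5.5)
Your proof is correct and follows the paper's route: Lemma~\ref{lem:embcount}, an $O(1)$ count for placing the cycle components onto whole cycles of $\tau$, and a contraction count of chain placements as intervals in the concatenated cycles of $\tau$. The differences are only in bookkeeping.

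\begin{itemize}
\item The paper handles boundary-wrapping chains by appending $C$ virtual elements to each cycle. This gives a single injective map into a line of length $n+RC$, at the cost of a factor $\prod_j\bigl(1+2C^2/(n-k-j)\bigr)\le e^{C^2}$. You instead sum over the at most $C$ chains that cross a boundary.
\item You delete the cycles of $\tau$ used by the cycle components of $G$, so that $n'-k'=n-k$ holds exactly.
\item You state the bound $(n-k)_{N_c-s}$ for the non-crossing remainder without proof. It follows by deleting the points used by the crossing chains and concatenating the remaining segments, which gives $(n-k-s)_{N_c-s}\le(n-k)_{N_c-s}$.
\end{itemize}
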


\begin{proof}
By Lemma~\ref{lem:embcount}, for any fixed $\tau \in A$,
\begin{equation}\label{eqn:bayes_upper}
    \frac{\mu(A \mid U_{I' \to J'})}{\mu(A)} = \frac{|\Emb_\tau(G)|}{(n-k)_{m-k}},
\end{equation}
where $G$ is the restriction graph of $U_{I' \to J'}$, $m = |V(G)|$ and $k = |E(G)|$.

Decompose $G = G_{\text{chains}} \cup G_{\text{cycles}}$, where $G_{\text{cycles}}$ consists of $R_G \le C$ disjoint cycles $Q_1, \dots, Q_{R_G}$ of lengths at most $C$, and $G_{\text{chains}}$ consists of $N_{\text{chains}}$ disjoint linear chains, each of length at most $C$. Let $k_{\text{cycles}} = |E(G_{\text{cycles}})| = |V(G_{\text{cycles}})| \le C^2$ and $k_{\text{chains}} = |E(G_{\text{chains}})|$. Because each cycle has an equal number of vertices and edges, the number of chains satisfies
\[
N_{\text{chains}} = |V(G_{\text{chains}})| - |E(G_{\text{chains}})| = m - k.
\]

To bound $|\Emb_\tau(G)|$, we first bound the choices for embedding $G_{\text{cycles}}$ into $\tau \in A$: Each cycle $Q_i$ of length $\ell_i \le C$ in $G_{\text{cycles}}$ must map bijectively onto a cycle of $\tau$ of the exact same length $\ell_i$.
Since $\tau$ contains at most $C$ cycles in total, there are at most $C$ candidate cycles in $\tau$ for each $Q_i$, and for each candidate cycle, at most $\ell_i \le C$ possible cyclic alignments.

Hence, the number of valid embeddings of $G_{\text{cycles}}$ into $\tau$ is bounded by at most $(C \cdot C)^{R_G} \le C^{2C} = O(1)$.

Next, fix any valid embedding of $G_{\text{cycles}}$ and consider embedding $G_{\text{chains}}$. Let $Z_1, \dots, Z_R$ ($R \le C$) be the disjoint cycles of $\tau$. Extend each cycle $Z_r$ into a linear sequence $\widetilde{Z}_r$ of length $|Z_r| + C$ by appending $C$ virtual elements at its end, where each virtual element acts as a duplicate of an initial element in $Z_r$. (See Figure~\ref{fig:virtual_elements} for a visualization of how these virtual elements allow boundary-wrapping chains to be mapped contiguously.) Concatenating these sequences yields a single linear sequence $P = \widetilde{Z}_1 \dots \widetilde{Z}_R$ of total length $n' = n + R C \le n + C^2$.
\begin{figure}[htbp]
\centering
\begin{tikzpicture}[>=Stealth]

    \tikzset{curvedchain/.style={-{Stealth[length=2.8mm, width=2.3mm]}, ultra thick, orange}}
    \tikzset{linearchain/.style={line width=3mm, orange, opacity=0.5}}
    \tikzset{normalcurved/.style={-{Stealth[length=2.8mm, width=2.3mm]}, ultra thick, blue}}
    \tikzset{normallinear/.style={line width=3mm, blue, opacity=0.5}}

    \node at (0, 2.7) {\large Cycles of $\tau$};

    \draw[thick, gray, dashed, ->] (-2.5, 1) circle (1.2cm);
    \node[gray] at (-2.5, 1) {$Z_1$};
    \draw[red, thick, dashed] (-2.5, 2.2) -- (-2.5, 2.6); 
    
    \draw[curvedchain] (-2.5,1) ++(120:1.2cm) arc (120:60:1.2cm);
    \node[orange, above] at (-2.5, 2.5) {Wraps boundary};

    \draw[thick, gray, dashed, ->] (2.5, 1) circle (1.2cm);
    \node[gray] at (2.5, 1) {$Z_2$};
    \draw[red, thick, dashed] (2.5, 2.2) -- (2.5, 2.6); 
    
    \draw[normalcurved] (2.5,1) ++(315:1.2cm) arc (315:225:1.2cm);
    \node[blue, below] at (2.5, -0.3) {Inside boundary};

    \draw[->, ultra thick, dashed, gray] (0, -0.4) -- (0, -1.8) node[midway, right, text=black] {Unroll \& Concatenate};

    \node at (0, -2.2) {\large Sequence $P = \widetilde{Z}_1 \widetilde{Z}_2$};

    \draw[ultra thick, gray, ->] (-5.5, -3.5) -- (-1.5, -3.5);
    \draw[ultra thick, gray, dashed, ->] (-1.5, -3.5) -- (0, -3.5);
    \draw[red, thick, dashed] (-1.5, -3.3) -- (-1.5, -3.8); 

    \draw[decorate, decoration={brace, amplitude=4pt}, thick] (-5.5, -3.1) -- (-1.5, -3.1) node[midway, above=4pt] {$Z_1$};
    \draw[decorate, decoration={brace, amplitude=4pt}, thick] (-1.5, -3.1) -- (0, -3.1) node[midway, above=4pt, font=\small] {$C$ virtual};

    \draw[ultra thick, gray, ->] (0, -3.5) -- (4, -3.5);
    \draw[ultra thick, gray, dashed, ->] (4, -3.5) -- (5.5, -3.5);
    \draw[red, thick, dashed] (4, -3.3) -- (4, -3.8); 

    \draw[decorate, decoration={brace, amplitude=4pt}, thick] (0, -3.1) -- (4, -3.1) node[midway, above=4pt] {$Z_2$};
    \draw[decorate, decoration={brace, amplitude=4pt}, thick] (4, -3.1) -- (5.5, -3.1) node[midway, above=4pt, font=\small] {$C$ virtual};

    \draw[linearchain] (-2.2, -3.5) -- (-0.8, -3.5);

    \draw[normallinear] (1, -3.5) -- (2.5, -3.5);

\end{tikzpicture}
\caption{The cycles of $\tau$ are unrolled and concatenated into a single linear sequence $P$. Appending $C$ virtual elements at the end of each cycle ensures that chains crossing cycle boundaries (orange) can be placed contiguously without breaking.}
\label{fig:virtual_elements}
\end{figure}

Because every chain in $G_{\text{chains}}$ has length at most $C$, any valid embedding of a chain into $\tau$ corresponds to a unique contiguous placement inside $P$ (using the virtual elements to encode wrap-around). Contracting the internal edges of each chain reduces the total required positions from $n'$ to $n' - k_{\text{chains}}$. Placing the heads of the $N_{\text{chains}} = m - k$ chains into the unassigned positions of $P$ can be done in at most
\[
(n' - k_{\text{chains}})_{N_{\text{chains}}} \le (n + C^2 - k + k_{\text{cycles}})_{m - k} \le (n + 2C^2 - k)_{m - k}
\]
ways. Multiplying by the choices for $G_{\text{cycles}}$ yields:
\[
|\Emb_\tau(G)| \le C^{2C} \cdot (n + 2C^2 - k)_{m - k}.
\]

Substituting this bound into \eqref{eqn:bayes_upper} gives:
\begin{align*}
\frac{\mu(A \mid U_{I' \to J'})}{\mu(A)} 
&\le C^{2C} \cdot \frac{(n + 2C^2 - k)_{m - k}}{(n - k)_{m - k}} = C^{2C} \prod_{j=0}^{m - k - 1} \left( 1 + \frac{2C^2}{n - k - j} \right).
\end{align*}

Set $c = 1/4$. Since $m \le 2k < 2cn = n/2$, we have $n - k - j \ge n - m > n/2$ for all $0 \le j < m - k$. Applying the inequality $1 + x \le e^x$:
\begin{align*}
\prod_{j=0}^{m-k-1} \left(1+\frac{2C^2}{n-k-j}\right)
&\le \exp\left(\sum_{j=0}^{m-k-1}\frac{2C^2}{n/2}\right) \\
&\le \exp\left((m-k)\cdot\frac{4C^2}{n}\right)
\le e^{4cC^2}=e^{C^2}.
\end{align*}

Combining all constants, we conclude:
\[
\frac{\mu(A \mid U_{I' \to J'})}{\mu(A)} \le C^{2C} e^{C^2} = O(1),
\]
which proves $\mu(A \mid U_{I' \to J'}) \le O(\mu(A))$.
\end{proof}

Finally, we use these bounds to prove globalness of the restricted conjugacy classes described below.
\begin{prop}[Globalness of restricted conjugacy classes]
\label{prop:globalness}
For every $C > 0$, there exists $c > 0$ such that the following holds for all sufficiently large $n$. Let $A \subseteq S_n$ be a conjugacy class with $\cyc(A) \le C$, and let $U_{I' \to J'}$ be an umvirate of size $k = |I'| = |J'| < cn$ consisting solely of 1-chains and 2-chains. 
Then $A \cap U_{I' \to J'}$ is $n^{0.01}$-global in $U_{I' \to J'}$.
\end{prop}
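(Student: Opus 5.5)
The plan is to reduce globalness to a comparison of conditional measures, and then obtain the needed lower bound from Lemma~\ref{lem:measure_lower_bound} and the needed upper bound from Lemma~\ref{lem:measure_upper_bound}. View $U:=U_{I'\to J'}$ with its uniform measure and set $f=1_{A\cap U}$. Then $\|f\|_2^2=\mu(A\mid U)$. A $d$-restriction of $f$ inside $U$ is given by $d$ further pairs $i''_r\to j''_r$ with $i''_r\notin I'$ and $j''_r\notin J'$ (otherwise the restriction is empty or trivial). Write $U''$ for the resulting umvirate, so that $U\cap U''$ is a $(k+d)$-umvirate and $\|f_{I''\to J''}\|_2^2=\mu(A\mid U\cap U'')$. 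It therefore suffices to show, for every $d\ge 1$, that
\[
\mu(A\mid U\cap U'')\le n^{0.02d}\,\mu(A\mid U).
\]

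For the lower side, $U$ consists only of disjoint $1$- and $2$-chains and $k<cn$. Lemma~\ref{lem:measure_lower_bound} gives $\mu(A\mid U)\ge \frac12\mu(A)$, provided $c\le 1/(2C)$. We also need a crude lower bound on $\mu(A)$. Since $\cyc(A)\le C$, the centralizer of any $\tau\in A$ has order at most $\prod_i \ell_i\cdot C!\le n^{C}C!$, so $\mu(A)\ge n^{-C}/C!$. Hence $\mu(A\mid U)\ge n^{-C-1}$ for large $n$.

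Large $d$ is then trivial. If $d\ge D:=100(C+1)$, then $n^{0.02d}\mu(A\mid U)\ge n^{2(C+1)}n^{-C-1}\ge 1\ge \mu(A\mid U\cap U'')$.

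For $1\le d<D$, I would analyse the restriction graph $G$ of $U\cap U''$. It is obtained from the graph of $U$ (disjoint chains of length at most $2$) by adding $d$ edges. Since $U\cap U''$ is an umvirate, every vertex has in- and out-degree at most $1$, so $G$ is a disjoint union of chains and cycles. Each added edge merges at most two existing components. Consequently every component containing a new edge has at most $3(d+1)+d=O(C)$ edges, while untouched components remain chains of length at most $2$. Every cycle must contain a new edge, so there are at most $d<D$ cycles, each of length $O(C)$. Thus $G$ satisfies the hypotheses of Lemma~\ref{lem:measure_upper_bound} with the constant $C':=\max(C,4D)$, and $k+d<c'n$ once $c$ is chosen smaller than the constant $c'$ supplied by that lemma for $C'$. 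The lemma gives $\mu(A\mid U\cap U'')\le K\mu(A)$ with $K=K(C)$. Combining with the lower side,
\[
\mu(A\mid U\cap U'')\le K\mu(A)\le 2K\mu(A\mid U)\le n^{0.02d}\mu(A\mid U)
\]
for $n$ large, since $d\ge1$. (If $G$ is not embeddable, the left side is $0$ and there is nothing to prove.)

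The main obstacle I expect is the bookkeeping in the small-$d$ case. One must check that merging components through new edges cannot create unboundedly long chains or many cycles, and that the chosen constants are compatible: $c$ must be below both $1/(2C)$ and the $c'$ of Lemma~\ref{lem:measure_upper_bound} for $C'$. The threshold $D$ depends only on $C$, which is what makes the constants compatible.
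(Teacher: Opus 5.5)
Your proposal is correct and follows essentially the same route as the paper: you lower-bound $\mu(A\mid U)$ by Lemma~\ref{lem:measure_lower_bound} together with the centralizer bound $\mu(A)\ge 1/(C!\,n^C)$, dispose of large $d$ trivially, and for bounded $d$ check that the merged restriction graph has boundedly many short cycles and short chains so that Lemma~\ref{lem:measure_upper_bound} applies. The only difference is cosmetic. The paper proves the stronger ratio bound $\gamma^{d}$ with threshold $D_0=\lceil 200(C+1)\rceil$, whereas you prove exactly the required squared bound $n^{0.02d}=\gamma^{2d}$ with threshold $100(C+1)$.
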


\begin{proof}
Set $\gamma = n^{0.01}$. Let $U_{I \to J}$ be an umvirate of size $d = |I| = |J| \ge 1$ compatible with $U_{I' \to J'}$ and let $U_{I'' \to J''} := U_{I \to J} \cap U_{I' \to J'}$ denote the combined restriction event of size $k'' = |I''| = |J''| \le k + d$. We prove the stronger statement:
\[
\frac{\mu(A \mid U_{I \to J} \cap U_{I' \to J'})}{\mu(A \mid U_{I' \to J'})} \le \gamma^d.
\] 

First, because $U_{I' \to J'}$ has size $k < cn$ and consists entirely of disjoint chains of length at most $2$, by picking a sufficiently small $c>0$ we can apply Lemma~\ref{lem:measure_lower_bound} and obtain a lower bound for the denominator:
\[
\mu(A \mid U_{I' \to J'}) \ge \frac{1}{2} \mu(A).
\]
Furthermore, the assumption $\cyc(A) \le C$ implies the classical bound $\mu(A) \ge \frac{1}{C! n^C}$.

Set $D_0 := \lceil 200 (C + 1) \rceil = O(1)$. We partition the proof into two cases depending on the restriction size $d = |I|$:

\paragraph{Case 1: Large restriction ($d \ge D_0$).}
Since $\mu(A \mid U_{I'' \to J''}) \le 1$, we use the trivial upper bound for the ratio:
\[
\frac{\mu(A \mid U_{I'' \to J''})}{\mu(A \mid U_{I' \to J'})} \le \frac{1}{\frac{1}{2}\mu(A)} \le 2 C! n^C.
\]
For $d \ge D_0$, we have $\gamma^d = (n^{0.01})^d \ge n^{0.01 D_0} \ge n^{2(C + 1)} > 2 C! n^C$ for all sufficiently large $n$. Thus, the claim holds trivially in this case.

\paragraph{Case 2: Small restriction ($d < D_0$).}
Let $G''$ be the restriction graph associated with $U_{I'' \to J''}$. Graph $G''$ is obtained by adding $d < D_0$ edges to $G'$, whose components were initially chains of length at most $2$. Consequently, $G''$ consists of:
\begin{itemize}
    \item At most $d < D_0$ disjoint cycles, each of length at most $3 d < 3 D_0$;
    \item Disjoint linear chains, each of length at most $C' := 3 D_0 + 2 = O(1)$.
\end{itemize}

Set $C'' := \max(C, C') = O(1)$, and set $c''$ to be the constant corresponding to $C''$ in Lemma~\ref{lem:measure_upper_bound}. Assuming that $c \le c''/2$ and that $n$ is sufficiently large, the combined restriction size satisfies $k'' = k + d < cn + D_0 < c'' n$. 

Thus, $U_{I'' \to J''}$ satisfies all hypotheses of Lemma~\ref{lem:measure_upper_bound} with constants $C''$ and $c''$. Applying Lemma~\ref{lem:measure_upper_bound} gives:
\[
\mu(A \mid U_{I'' \to J''}) \le O(\mu(A)).
\]
Combining this with our lower bound for $\mu(A \mid U_{I' \to J'})$, we obtain:
\[
\frac{\mu(A \mid U_{I'' \to J''})}{\mu(A \mid U_{I' \to J'})} \le \frac{O(\mu(A))}{\frac{1}{2} \mu(A)} = O(1).
\]
Since $d \ge 1$ and $\gamma = n^{0.01} \to \infty$ as $n \to \infty$, the $O(1)$ upper bound is strictly smaller than $\gamma^1 \le \gamma^d$ for all sufficiently large $n$, completing the proof.
\end{proof}

\section{Proof of Theorem~\ref{thm:main}}
\label{sec:proof}

We first prove a consequence of globalness showing that two sufficiently large global sets must interact through a suitable conjugacy class.

\begin{lem}\label{lem:analytic-covering}
    For small enough constant $\epsilon > 0$ and for every $C>0$, there exists $n_0 > 0$ such that the following holds for all $n > n_0$. Let $\tau \in A_n$ be a permutation without fixed points. Let $A_1, A_2 \subseteq A_n$ be $n^{0.01}$-global sets, both of density at least $n^{-C}$. Then $\tau^{A_n}A_2 \cap A_1 \neq \emptyset$.
\end{lem}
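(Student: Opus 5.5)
We will show that the number of representations $a_1 = t a_2$ with $a_1\in A_1$, $a_2\in A_2$, $t\in\tau^{A_n}$ is positive. Write $f_i = 1_{A_i}/\mu_{A_n}(A_i)$ and let $h = 1_{\tau^{A_n}}/\mu_{A_n}(\tau^{A_n})$. All expectations are taken over the uniform measure on $A_n$.

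Consider the quantity
\[
\Lambda := \E_{x,y\in A_n}\, f_1(x) h(xy^{-1}) f_2(y) = \langle f_1, h * f_2\rangle .
\]
It is positive if and only if $\tau^{A_n}A_2\cap A_1\neq\emptyset$. Since $h$ is a class function, convolution with $h$ acts on each isotypic component $W_\chi$ as the scalar $\chi(\tau)/\chi(1)$, possibly up to complex conjugation depending on normalization. Expanding in isotypic components therefore gives
\[
\Lambda = \sum_{\chi\in\widehat{A_n}} \frac{\overline{\chi(\tau)}}{\chi(1)}\,\langle f_1^{=\chi}, f_2^{=\chi}\rangle .
\]
The trivial character contributes exactly $1$, because $\E f_i = 1$. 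It therefore suffices to show that the nontrivial terms sum to less than $1$ in absolute value.

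We bound each nontrivial term with Cauchy--Schwarz and Proposition~\ref{prop:globalness-bounds}. The sets $A_i$ are $n^{0.01}$-global with density at least $n^{-C}$, so for $n$ large we get
\[
|\langle f_1^{=\chi}, f_2^{=\chi}\rangle| \le \|f_1^{=\chi}\|_2\|f_2^{=\chi}\|_2\le \chi(1)^{0.05}.
\]

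For the character values we use Theorem~\ref{lem:larsen-shalev-1}. Since $\tau$ is fixed-point-free, $|\chi(\tau)|\le \chi(1)^{1/2+o(1)}$ for every $\chi\in\mathrm{Irr}(S_n)$. To transfer this to $A_n$, note the following.
\begin{itemize}
\item If $\chi\in\widehat{A_n}$ is the restriction of an irreducible character of $S_n$, the bound carries over directly.
\item If $\chi$ is one of the two constituents of a split restriction $\psi|_{A_n}$, its dimension is $\psi(1)/2$. Its values are either $\psi(\tau)/2$, or, when $\tau$ lies in a split class, of size $O(\sqrt{\prod \lambda_i})$ by the classical formula. In the latter case the value is $\chi(1)^{o(1)}$ and well below $\chi(1)^{1/2}$.
\end{itemize}
In all cases $|\chi(\tau)|\le \chi(1)^{1/2+o(1)}$.

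Combining the two bounds, the nontrivial part of $\Lambda$ is at most
\[
\sum_{\chi\ne 1}\chi(1)^{-1/2+0.05+o(1)} \le \sum_{\chi\neq 1}\chi(1)^{-0.4}.
\]
By Theorem~\ref{thm:witten zeta function} with $s=0.4$, this is at most $\epsilon'$ for any fixed $\epsilon'>0$ once $n$ is large. Taking $\epsilon'<1$ gives $\Lambda>0$, which proves the lemma.

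The main obstacle is the transfer of the Larsen--Shalev bound from $S_n$ to $A_n$ in the split case; this is where the fixed-point-free hypothesis is essential. If the uniform $o(1)$ causes difficulty for small-dimensional characters, a fallback is to use the fact that there are only finitely many characters of $A_n$ with $\chi(1)\le n^{K}$. For these, either the $o(1)$ is absorbed because $\chi(1)\to\infty$, or one bounds $|\chi(\tau)|$ directly for fixed-point-free $\tau$ at low level. Note also that Proposition~\ref{prop:globalness-bounds} must hold uniformly in $\chi$, which the paper asserts.
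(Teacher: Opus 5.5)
Your proposal is correct and follows the paper's proof essentially step for step: you expand $\langle f_1, h*f_2\rangle$ over $\widehat{A_n}$, note the trivial character contributes $1$, bound each nontrivial term by Cauchy--Schwarz with Proposition~\ref{prop:globalness-bounds} and the Larsen--Shalev bound for fixed-point-free $\tau$, and sum with the Witten zeta bound (your exponent $-0.4$ versus the paper's $-1/3$ is immaterial). Your explicit transfer of Theorem~\ref{lem:larsen-shalev-1} from $S_n$ to $A_n$, including the split case, is more careful than the paper, which applies that $S_n$-statement to characters of $A_n$ without comment. Your claim that the split-class value $O(\sqrt{\prod q_i})=e^{O(\sqrt{n}\log n)}$ is $\chi(1)^{o(1)}$ holds because self-conjugate partitions have level at least $(n-1)/2$, and hence exponentially large dimension by Lemma~\ref{lem:ellis}.
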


\begin{proof}
    Denote by $f = \frac{1_{A_1}}{\mu_{A_n}(A_1)}$ and $g = \frac{1_{A_2}}{\mu_{A_n}(A_2)}$ the normalized indicator functions of $A_1$ and $A_2$, respectively. Let $h = \frac{1_{\tau^{A_n}}}{\mu_{A_n}(\tau^{A_n})}$ be the normalized indicator function of the conjugacy class of $\tau$ in $A_n$.

    Since both $f$ and $g$ satisfy the hypotheses of Proposition~\ref{prop:globalness-bounds}, applying the proposition to each of them yields
\begin{equation}\label{eqn:global_implies_fourier_bounds}
|f^{=\chi}|_2^2 \le \chi(1)^{0.05},
\qquad
|g^{=\chi}|_2^2 \le \chi(1)^{0.05}
\qquad
\text{for all } \chi \in \widehat{A_n}.
\end{equation}
    
    We wish to show that $\E_{x, y \in A_n}[f(x) h(y) g(y^{-1}x)] > 0$. We can rewrite the expectation as an inner product:
    \[
    \E_{x, y \in A_n} [f(x) h(y) g(y^{-1}x)] = \langle f ,h * g \rangle_{A_n},
    \]
    where the convolution is defined as $(u * v)(z) = \E_{x \in A_n}[u(x) v(x^{-1}z)]$. 

    Since $h$ is a class function in $A_n$, convolution with $h$ acts as a scalar multiplier on each isotypic component of $L^2(A_n)$. Specifically, for any $\chi \in \widehat{A_n}$, the multiplier is $\frac{\chi(\tau)}{\chi(1)}$. Expanding the inner product over irreducible representations yields:
    \begin{equation}\label{eq:fourier_expansion}
        \langle f ,h* g \rangle_{A_n} = \sum_{\chi \in \widehat{A_n}} \frac{\chi(\tau)}{\chi(1)} \langle f^{=\chi}, g^{=\chi} \rangle_{A_n}.
    \end{equation}
    For the trivial character $\chi = \mathrm{triv}$, we have $\chi(\tau) = \chi(1) = 1$. Since $f$ and $g$ are probability densities, $\langle f^{=\mathrm{triv}}, g^{=\mathrm{triv}} \rangle_{A_n} = \E[f]\E[g] = 1$. Thus, the contribution of the trivial character to the sum is exactly $1$.

Because $\tau$ has no fixed points, Theorem~\ref{lem:larsen-shalev-1} implies that for small $\delta > 0$ and sufficiently large $n$, we have $|\chi(\tau)| \le \chi(1)^{1/2 + \delta}$. Therefore, $\left| \frac{\chi(\tau)}{\chi(1)} \right| \le \chi(1)^{-1/2 + \delta}$.
    
    We now  bound the contribution of the non-trivial characters:
    \begin{align*}
    \left| \langle f, h*g \rangle_{A_n} - 1 \right|
    &\le \sum_{\chi \neq \mathrm{triv}}
        \left| \frac{\chi(\tau)}{\chi(1)} \right|
        \cdot \left| \langle f^{=\chi}, g^{=\chi} \rangle_{A_n} \right| 
    \le \sum_{\chi \neq \mathrm{triv}}
        \chi(1)^{-1/2 + \delta}
        \|f^{=\chi}\|_2 \|g^{=\chi}\|_2 \\
    &\le \sum_{\chi \neq \mathrm{triv}}
        \chi(1)^{-1/2 + \delta} \chi(1)^{0.05} 
    \le \sum_{\chi \neq \mathrm{triv}}
        \chi(1)^{-1/3}.
\end{align*}
Where choosing $\delta \le 0.1$.
     The second inequality follows from the Cauchy--Schwarz inequality and the third one follows from substituting \eqref{eqn:global_implies_fourier_bounds} into the sum. 
    
    By the bounds on the Witten zeta function for the alternating group (Theorem~\ref{thm:witten zeta function}), the sum $\sum_{\chi \neq \mathrm{triv}} \chi(1)^{-1/3}$ decays to $0$ as $n \to \infty$. Therefore, for sufficiently large $n$,
    \[
    \left| \langle f ,h* g\rangle_{A_n} - 1 \right| = o(1) < 1,
    \]
    which forces $\langle f ,h* g \rangle_{A_n} > 0$. This implies that there exist $x \in A_1$ and $y \in A_2$ such that $xy^{-1} \in \tau^{A_n}$, completing the proof.
\end{proof}


Now we can prove Theorem~\ref{thm:main}.
\begin{proof}[Proof of Theorem~\ref{thm:main}]
    Let $\epsilon_0$ be the constant from Corollary~\ref{cor:dona}, let $C=1/\epsilon_0$, and let $C_2 > C$ be a constant to be chosen later. Let $c = c(C_2) < \epsilon_0$ be another constant that will be fixed later.
We show that $\tau^{A_n}\subseteq (\sigma^{S_n})^2$ holds for all $\sigma \in S_n$ and $\tau \in A_n$ under the assumption $\cyc(\sigma) <cn$. We will prove the statement for all sufficiently large $n$, since reducing $c$ makes the statement trivial for small values of $n$.

By applying Lemmas~\ref{lem:2-cases} and~\ref{lem:cancelation}, we repeatedly cancel small cycles from $\sigma$ and $\tau$ as follows. Whenever $\tau$ has at least $2C_2$ cycles of length $\ell$ for some $\ell \le 2C$, and $\sigma$ has at least $4C$ cycles of length $\ell'$ for some $2 \le \ell' \le C_2$, we can apply Lemma~\ref{lem:2-cases} with $a=\ell'$ and $b=\ell$ to cancel some of these cycles. Therefore, we can reduce the permutations $\sigma$ and $\tau$ into a pair $\sigma_1 \in S_{n_1}, \tau_1 \in A_{n_1}$ satisfying one of the following two cases: 
\begin{description}
    \item[Case 1] $\tau_1$ has fewer than $2C_2$ cycles of length $\ell$ for each $\ell \le 2C$; or
    \item[Case 2] $\sigma_1$ has fewer than $4C$ cycles of length $\ell$ for each $2 \le \ell \le C_2$.
\end{description}

Note that the average cycle length of $\sigma$ is $n/\cyc(\sigma)>c^{-1}$, and we canceled cycles of length at most $C_2$. Therefore, assuming $c(C_2) <C_2^{-1}$, the average cycle length of $\sigma_1$ is larger than that of $\sigma$, that is, $n_1/\cyc(\sigma_1)>c^{-1}$. Additionally, assuming $c(C_2) <C_2^{-1}/2$, the number of canceled elements is at most $\cyc(\sigma)\cdot C_2 < n/2$, and hence $n_1 > n/2$ can be assumed to be sufficiently large.

Case 1 follows from Corollary~\ref{cor:dona} assuming $n_1$ is sufficiently large, since $\cyc(\sigma_1) < cn_1 < \epsilon_0 n_1$ and $\cyc(\tau_1) < O(1) + \frac{n_1}{2C} <\epsilon_0 n_1$.
Therefore, we may focus on Case 2 and assume that for every $2 \le \ell \le C_2$, $\sigma_1$ has fewer than $4C$ cycles of length $\ell$. Note that $\sigma_1$ may still contain up to $cn_1$ fixed points. If both $\sigma_1$ and $\tau_1$ contain fixed points, we cancel fixed points from both of them until we obtain a new pair $\sigma_2 \in S_{n_2}, \tau_2 \in A_{n_2}$, such that one of them has no fixed points left. Since we removed only cycles of length 1, which is smaller than the average cycle length of $\sigma_1$, we may deduce that $\cyc(\sigma_2) <cn_2$. We also have $n_1 -n_2 \le \cyc(\sigma_1) < cn_1$, implying that $n_2 > (1-c)n_1$ can be assumed to be sufficiently large. Next, if $\sigma_2$ has no fixed points, then by Lemma~\ref{thm:Larsen-shalev-small-cycles}, $E(\sigma_2)\le 1/C_2 + O(1/\log(n_2))$, and we finish by Theorem~\ref{thm:Larsen-Shlaev} assuming that $C_2 > 4$ and that $n_2$ is sufficiently large. Otherwise, we may assume that $\tau_2$ has no fixed points.

Next, we want to apply Lemma~\ref{lem:big-cycle-sigma} iteratively in order to cancel cycles of $\sigma_2$ and $\tau_2$. To avoid ambiguity as the permutations shrink, let $m$ denote the number of active coordinates at any given step (initially $m = n_2$), and let $\tilde{\sigma} \in S_m$ and $\tilde{\tau} \in A_m$ denote the remaining sub-permutations (initially $\sigma_2$ and $\tau_2$). Let $k$ be the total number of elements in $\tilde{\tau}$ belonging to ``small cycles'' (length $\le C_2/100$). Let $M$ be the set of cycles in $\tilde{\sigma}$ of length between $2$ and $C_2$. We temporarily ignore this set and do not cancel its cycles.

We proceed with the following iterative cancellation step as long as $k > \epsilon_0 m$ and $\tilde{\sigma}$ contains a cycle $C_{big} \notin M$ of length $C_2 < \ell \le \frac{k}{2} - \frac{C_2}{50}$ alongside at least $\ell/3$ fixed points:
We select a collection of small cycles in $\tilde{\tau}$ whose total length $L$ satisfies $\lfloor \frac{4\ell}{3} \rfloor - \frac{C_2}{50} \le L \le \lfloor \frac{4\ell}{3} \rfloor$. We can easily hit a sum exactly in this narrow range because the available small cycles of $\tilde{\tau}$ are all of length at most $C_2/100$, and the sum of their lengths is  $k > 4\ell/3$. (We may also assume that the selected cycles of $\tilde{\tau}$ form an even permutation, by swapping or adding one small cycle if a parity correction is needed, which is safely absorbed by the bounds). 

By Lemma~\ref{lem:big-cycle-sigma}, the square of the conjugacy class of an $\ell$-cycle in $S_L$ covers $A_L$. Thus, we can cancel $C_{big}$ and $L - \ell$ fixed points from $\tilde{\sigma}$ and cancel the collection of small cycles from $\tilde{\tau}$. Then we update $m$, update $k$, and repeat the process.

Because $m$ strictly decreases, this iteration must eventually terminate. Crucially, at each step we remove $L$ coordinates from the active set while consuming $L - \ell$ fixed points from $\tilde{\sigma}$. Since $L \ge \lfloor \frac{4\ell}{3} \rfloor - \frac{C_2}{50}$ and $\ell > C_2$, we have $L - \ell \ge \frac{\ell}{3} - \frac{C_2}{50} - 1 \ge \frac{L}{5}$ (assuming $C_2$ is sufficiently large). The total number of fixed points initially available in $\sigma_2$ was at most $c n_2$, meaning the total number of coordinates removed across all iterations cannot exceed $5 c n_2$. Therefore, we have $m \ge (1-5c)n_2$, which guarantees that $m$ remains sufficiently large for the remainder of the proof. 

Furthermore, we can maintain the strong bound $\cyc(\tilde{\sigma}) < cm$ throughout the entire process. This is equivalent to showing that the average cycle length $m/\cyc(\tilde{\sigma})$ remains strictly greater than $c^{-1}$. The initial average length was $n_2/\cyc(\sigma_2) > c^{-1}$, so it suffices to show that this averages increases with every cancellation. At each iterative step, we remove $L$ elements distributed across exactly $1 + L - \ell$ cycles of $\tilde{\sigma}$ (specifically, one cycle of length $\ell$ and $L - \ell$ fixed points). The average length of these removed cycles is $\frac{L}{1 + L - \ell}$. Since $L - \ell \ge \frac{L}{5}$, this removed average is strictly less than $5$. Therefore, assuming $c < 1/5$, the average length of the removed cycles is smaller than the average before the cancellation, implying that every iteration of the cancellation can only increase the average cycle length.

The iteration halts when we can no longer perform the cancellation step, leaving us in one of the following three scenarios:

\begin{enumerate}[label=(\roman*)]
    \item \textbf{$k$ becomes small ($k \le \epsilon_0 m$):}
    Since $\tilde{\tau} \in A_m$ is fixed-point free and has $k$ elements in cycles of length $\le C_2/100$, we have
    \[
        \cyc(\tilde{\tau}) \le \frac{k}{2} + \frac{m}{C_2/100} \le \frac{\epsilon_0}{2} m + \frac{100}{C_2} m \le \epsilon_0 m
    \]
        (choosing $C_2$ large enough). Since we also have $\cyc(\tilde{\sigma}) < c m \le \epsilon_0 m$, we may apply Corollary~\ref{cor:dona} and deduce that $\tilde{\tau} \in (\tilde{\sigma}^{A_m})^2$.

    \item \textbf{$\tilde{\sigma}$ has few fixed points:}
    Suppose the iteration halts because $\tilde{\sigma}$ contains a cycle $C_{big} \in \tilde{\sigma} \setminus M$ of length $C_2 < \ell \le \frac{k}{2} - \frac{C_2}{50}$, but has insufficient fixed points to satisfy the padding ratio requirement (i.e., $F := |\fix(\tilde{\sigma})| < \ell/3$). We analyze two subcases based on $F$:

    If $F < C_2/50$, then $\tilde{\sigma}$ contains at most $O(1)$ elements in cycles of length $\le C_2$ (namely, the $F$ fixed points and the fixed set $M$ of small cycles). Consequently, by Lemma~\ref{thm:Larsen-shalev-small-cycles} $E(\tilde{\sigma}) < \frac{1}{C_2} + \epsilon$, and we finish by Theorem~\ref{thm:Larsen-Shlaev}.

    Otherwise, $F \ge C_2/50$. In this case, we perform one final cancellation using $C_{big}$ together with almost all remaining fixed points. Specifically, instead of requiring $L - \ell \approx \ell/3$, we choose a set of small cycles in $\tilde{\tau}$ forming an even permutation such that the padding $L - \ell$ lies in the interval $[F - \frac{C_2}{50}, F]$. Because this interval has length $C_2/50$, such a selection is guaranteed to exist using the available small cycles of $\tilde{\tau}$. Canceling these elements leaves at most $C_2/50$ fixed points in the updated sub-permutation $\tilde{\sigma}'$.

    To safely apply Theorem~\ref{thm:Larsen-Shlaev} after this final step, we must verify that the number $m'$ of remaining active coordinates is sufficiently large. Indeed, since $\ell \le \frac{k}{2} - \frac{C_2}{50} < \frac{m}{2}$ and the consumed fixed points satisfy $L - \ell \le F < \ell/3 < \frac{m}{6}$, the total number of removed coordinates is at most $\frac{m}{2} + \frac{m}{6} = \frac{2}{3}m$. Thus, $m' \ge \frac{1}{3}m \ge \frac{1}{6}n_2$, which remains sufficiently large. The resulting sub-permutation $\tilde{\sigma}'$ now has $O(1)$ elements in cycles of length $\le C_2$ out of the $m' \ge \frac{1}{6}n_2$ elements, yielding $E(\tilde{\sigma}') < \frac{1}{C_2} + \epsilon$ by Lemma~\ref{thm:Larsen-shalev-small-cycles}, allowing us to finish by Theorem~\ref{thm:Larsen-Shlaev} as above.
\item \textbf{All nontrivial cycles outside $M$ are long:}
    We are left with $k>\epsilon_0m$, and every nontrivial cycle of
    $\tilde{\sigma}$ outside $M$ has length greater than
    $k/2-C_2/50\ge\epsilon_0m/4$. There are therefore at most
    $4/\epsilon_0$ such cycles. There is also at most a constant number of cycles in $M$, since there are fewer than $4C$ cycles of each length from $2$ to $C_2$.
    We now try to `cancel' fixed points of $\tilde{\sigma}$.

    We follow the restriction-and-shift argument of
    \cite[proof of Lemma~6.1, Steps~2--4]{KLS23}. Write
    \[
       F=|\fix(\tilde{\sigma})|,\qquad
       \mathcal I=\tilde{\sigma}^{S_m},\qquad
       \mathcal A=\tilde{\tau}^{S_m}.
    \]
    For a set $\mathcal B$ of permutations, we write
    $\mathcal B_{S\to T}:=\mathcal B\cap U_{S\to T}$.

    \medskip
    \noindent\emph{Removing fixed points by restrictions.}
    Put $L=\lfloor C_2/100\rfloor$. Since the short cycles of
    $\tilde{\tau}$ contain $k>\epsilon_0m$ coordinates, some length
    $r\in\{2,\ldots,L\}$ occurs at least $k/L^2$ times. We choose $c$
    to be small enough such that this number exceeds $2F$. 
    Set $s= \lfloor F/\lfloor r/2\rfloor \rfloor$,   so we can therefore select $2s$ distinct $r$-cycles of
    $\tilde{\tau}$ and apply Lemma~\ref{lem:small-cycle-removal} repeatedly $s$ times on disjoint coordinates. Concatenating their tuples
    gives $S,T,W$ of length $t=2rs$.

    The restriction $T\to W$ corresponds to the selected $2s$ cycles of
    $\tilde{\tau}$. Each of $S\to T$ and $S\to W$ corresponds to
    $sr/2$ fixed points of $\tilde{\sigma}$, together with disjoint
    $1$-chains and $2$-chains. In particular only a bounded number of fixed points remain in $\tilde{\sigma}$, and the chain
    restrictions have at most $5cm$ edges. The coordinates of restrictions that correspond to the different applications of Lemma~\ref{lem:small-cycle-removal} can be chosen to be disjoint when
    $c<1/20$.

    \medskip
    \noindent\emph{Passing to the remaining coordinates.}
    The tuples $T$ and $W$ have the same underlying set. Let $q: [m] \to [m]$ be a bijection sending
    $W$ to $T$ coordinate-wise and fix its complement. It is a product
    of $2s$ disjoint $r$-cycles, so it is even. Choose a permutation
    $p$ sending $W$ to $S$ coordinatewise, with
    $\operatorname{sgn}(p)=\operatorname{sgn}(\tilde{\sigma})$.
     Define
    \[
    \begin{aligned}
       \mathcal I_1=\mathcal I_{S\to W}p, \qquad
       \mathcal I_2=q^{-1}\mathcal I_{S\to T}p,\qquad
       \mathcal A'&=\mathcal A_{T\to W}q.
    \end{aligned}
    \]
    Every permutation in these sets fixes $W$ pointwise. We identify
    the complement of $W$ with $[N]$, where
    \(
       N\ge m/2.
    \)
    This give $\mathcal I_1,\mathcal I_2\subseteq A_N$.
    Moreover, $\mathcal A'=(\tau')^{S_N}$, where $\tau'$ is obtained
    from $\tilde{\tau}$ by deleting the selected pairs of
    $r$-cycles.

    It suffices to prove that
    $\mathcal A'\mathcal I_2\cap\mathcal I_1\ne\varnothing$.
    Indeed, such an intersection gives
    $z\in\mathcal A_{T\to W}$, $x\in\mathcal I_{S\to T}$ and
    $y\in\mathcal I_{S\to W}$ with
    \[
       (zq)(q^{-1}xp)=yp.
    \]
    Hence $z=yx^{-1}\in\mathcal I\mathcal I^{-1}=\mathcal I^2$, which implies $\tilde{\tau}\in\mathcal I^2$.

    \medskip
    We move towards applying Lemma~\ref{lem:analytic-covering}, in order to deduce $\mathcal A'\mathcal I_2\cap\mathcal I_1\ne\varnothing$ and complete the proof. First, note that in each restriction of $\mathcal I$, we can take the fixed points and delete their coordinates. The resulting conjugacy
    class $\overline{\mathcal I}\subseteq S_u$ for $u \ge N$ has
    at most
    \(
       B:=\left\lceil4CC_2+4/\epsilon_0+L\right\rceil
    \)
    cycles. Indeed, this quantity includes the cycles in $M$, the long cycles, and the
    fewer than $L$ remaining fixed points. The remaining restriction
    consists solely of disjoint $1$-chains and $2$-chains, with
    $\le 5cm$ edges. Since $B$ depends only on $C_2$ and
    $\epsilon_0$, we may choose $c$ small enough for both
    Lemma~\ref{lem:measure_lower_bound} and
    Proposition~\ref{prop:globalness} to apply to this restriction
    of $\overline{\mathcal I}$.

    The measure lower bound and the centralizer formula for a
    conjugacy class give, for $i=1,2$,
    \[
       \mu_{S_N}(\mathcal I_i)
       \ge\frac12\mu_{S_u}(\overline{\mathcal I})
       \ge\frac{1}{2B!\,u^B}
       \ge N^{-(B+1)}
    \]
    for sufficiently large $m$, since $u\le m\le2N$. Here the shifts
    identify each restriction's ambient umvirate with $S_N$, so its
    conditional density becomes the ordinary density in $S_N$.
    Passing to $A_N$ doubles these densities.

    We get that $\mathcal{I}_1,\mathcal{I}_2$ are $N^{0.01}$-global from
    Proposition~\ref{prop:globalness}, since shifting preserve globalness, and their density is sufficiently large as mentioned above. Therefore, since $\tau'$ is fixed-point-free, we can apply Lemma~\ref{lem:analytic-covering} with $C=B+1$,
    to obtain that
    \(
       (\tau')^{A_N}\mathcal I_2\cap\mathcal I_1\ne\varnothing
    \)
     for large enough $N$, as needed.

\end{enumerate}
\end{proof}

\section*{Acknowledgments}
During the preparation of this manuscript, the authors used OpenAI's ChatGPT 5.4 to formulate Definition~\ref{def:embedding} and Lemma~\ref{lem:embcount} and to assist in the proof of other lemmas in Section~\ref{sec:globalness}. In addition, the authors used ChatGPT 5.4 and Google's Gemini 3.1 for proofreading, improving the readability and exposition of proofs, and clarifying minor intermediate details, and ChatGPT Astra to rewrite the last part of the proof of the main theorem in a clearer way, and for proof reading. All core mathematical results and proof strategies in this manuscript were obtained entirely by the authors.

\bibliographystyle{alpha}
\bibliography{refs}

\end{document}